\documentclass{amsart}
\usepackage{a4wide,amsmath,amssymb}
\usepackage[toc,page]{appendix}
\usepackage{hyperref}
\usepackage{url}
\usepackage{mathtools}
\usepackage{xcolor}

\newtheorem{thm}{Theorem}[section]

\newtheorem{cor}[thm]{Corollary}
\theoremstyle{remark}
\newtheorem{rem}[thm]{Remark}

\theoremstyle{definition}

\newtheoremstyle{Claim}{}{}{\itshape}{}{\itshape\bfseries}{:}{ }{#1}
\theoremstyle{Claim}

\newcommand{\T}{{\mathbb{T}}}
\newcommand{\Z}{{\mathbb{Z}}}

\newcommand{\R}{\mathbb{R}}

\newcommand{\eps}{\varepsilon}

\makeatletter
\@namedef{subjclassname@2020}{\textup{2020} Mathematics Subject Classification}
\makeatother

\theoremstyle{plain}

\def\sideremark#1{\ifvmode\leavevmode\fi\vadjust{
\vbox to0pt{\hbox to 0pt{\hskip\hsize\hskip1em
\vbox{\hsize3cm\tiny\raggedright\pretolerance10000
\noindent #1\hfill}\hss}\vbox to8pt{\vfil}\vss}}}

\begin{document}

\title[]{Transport-diffusion equations with irregular data and applications to stability estimates for second-order Hamilton-Jacobi PDEs}

\author{Gianmarco Giovannardi}
\address{Dipartimento di Matematica e Informatica ``Ulisse Dini'', Universit\`a degli Studi di Firenze, 
viale G. Morgagni 67/a, 50134 Firenze (Italy)}
\curraddr{}
\email{gianmarco.giovannardi@unifi.it}
\author{Alessandro Goffi}
\address{Dipartimento di Matematica e Informatica ``Ulisse Dini'', Universit\`a degli Studi di Firenze, 
viale G. Morgagni 67/a, 50134 Firenze (Italy)}
\curraddr{}
\email{alessandro.goffi@unifi.it}

\subjclass[]{}
\keywords{}
 \thanks{GG is supported by MIUR-PRIN 2022 Project \emph{Geometric-Analytic Methods
for PDEs and Applications}. The authors are members of the Gruppo Nazionale per l'Analisi Matematica, la Probabilit\`a e le loro Applicazioni (GNAMPA) of the Istituto Nazionale di Alta Matematica (INdAM) and they are partially supported by INdAM-GNAMPA 2026 Projects   \emph{Parabolic equations with drift: well posedness, gradient estimates and boundary effects} and \emph{Processi di diffusione non-lineari: regolarit\`a e classificazione delle soluzioni}
 }
\date{\today}

\subjclass[2020]{35A02,35B30,35K58,35Q84}

\keywords{Fokker-Planck equations, Transport-diffusion with irregular data, Viscous Hamilton-Jacobi equations}

\begin{abstract}
This paper studies quantitative uniqueness properties in $L^p$ spaces for Fokker-Planck and transport-diffusion equations under two new assumptions on their velocity field $b=b(x,t)$. We first prove $L^p$-stability estimates for advection-diffusion PDEs when $\mathrm{div}(b)\in L^r_t(L^q_x)$ with $r\in[2,\infty]$ and $q\in[n/2,\infty)$ satisfying the compatibility condition $n/(2q)+1/r\leq 1$. We then prove a stability result in $L^\infty$ for solutions of viscous transport equations when $\mathrm{div}(b(t))$ fails to be integrable in time. We apply these properties to obtain new continuous dependence estimates for viscous Hamilton-Jacobi equations via integral methods. One of the main novelties in this latter setting is that the constants of the estimates are all explicit with respect to the data of the problem. These imply new uniqueness properties for diffusive Hamilton-Jacobi equations without relying on the theory of viscosity solutions.
\end{abstract}

\date{\today}
\maketitle

\section{Introduction}
In this paper we address a priori stability estimates and uniqueness for the viscous conservative equation with periodic boundary conditions
\begin{equation}\label{fp}
\partial_t\rho-\eps\Delta \rho+\mathrm{div}(b(x,t)\rho)=0
\end{equation}
and discuss their byproducts to its adjoint version, namely the drift-diffusion PDE
\begin{equation}\label{dd}
-\partial_t v-\eps\Delta v-b(x,t)\cdot Dv=0.
\end{equation}
Note that the knowledge of quantitative and qualitative properties of this latter equation is the lynchpin for the study of the high-regularity theory for many nonlinear PDEs. In fact, our main motivation stems from the analysis of time-dependent viscous Hamilton-Jacobi equations: if $u$ solves the viscous Hamilton-Jacobi equation
\begin{equation}\label{HJ}
-\partial_t u-\eps\Delta u+H(Du)=0,
\end{equation}
then any directional derivative $v=\partial_e u$ solves \eqref{dd} with $b(x,t)=-D_pH(Du(x,t))$. Also, the difference of two solutions solves a linearized equation of the form \eqref{dd} with a possibly different velocity field involving $D_pH$.\\

Existence and uniqueness of solutions for \eqref{fp}-\eqref{dd} are standard when the drift vector field is Lipschitz continuous, cf. \cite{LSU} for $\eps>0$ and \cite{CrippaThesis} for $\eps=0$. The well-posedness in a discontinuous setting has been a challenging domain of research, being mainly motivated by problems in fluid dynamics and stochastic control. It mainly proceeds through a main step, which consists in proving a suitable a priori estimate, combined with a fixed point argument and/or a regularization procedure. The former can be performed under certain conditions on the drift that can be in turn  understood focusing on the integral term involving the convection 
\begin{equation}
\label{eq:intbDrr}
\iint b\cdot D\rho\ \rho\,dxdt,
\end{equation}
where we used the solution $\rho$ as a test function in the weak formulation. There are four known main regimes under which one can establish the well-posedness of these linear PDEs with rough coefficients:
\begin{itemize}
\item[(a)] the Ambrosio-Diperna-Lions \cite{Ambrosio04,DipernaLions} setting, where \eqref{eq:intbDrr} (and hence $L^2$ a priori estimates) can be handled via an integration by parts as follows
\[
\int b\cdot D\rho\ \rho\,dx=\int b\cdot D\left(\frac{\rho^2}{2}\right)\,dx=-\int \mathrm{div}(b)\frac{\rho^2}{2}\,dx
\]
It is sufficient to ask $b$ bounded and the following conditions
\[
b\in L^1_t(W^{1,1}_x) \text{ or }b\in L^1_t(BV_x)\text{ and }[\mathrm{div}(b)]^-\in L^1_tL^\infty_x
\]
to conclude $L^2$ a priori estimate, via the Gronwall inequality, and the well-posedness of the equation \cite{CrippaThesis}. These estimates actually hold in the inviscid case $\eps=0$, but still the diffusive regime can be addressed for the so-called parabolic solutions, see e.g. \cite{BonicattoCiampaCrippa2022,BonicattoCiampaCrippa2024,LeBrisLions}.

\item[(b)] Lions-Seeger setting \cite{LionsSeegerARMA}, addressed within the framework of viscosity solutions \cite{CrandallIshiiLions}, in the case in which $b$ satisfies the one-sided Lipschitz condition given by 
\[
(b(x,t)-b(y,t))\cdot(x-y)\geq -C(t)|x-y|^2
\]
for some nonnegative $C \in L^1([0,T])$ or it has coordinate-by-coordinate semi-increasing velocity fields \cite{LionsSeegerAIHP}. 

\item[(c)] the Aronson-Serrin-Ladyzhenskaya setting \cite{AronsonSerrin,LSU,BOP,Bianchini2017}, where one does not proceed by integration by parts in \eqref{eq:intbDrr}. The H\"older inequality first and the (parabolic) Sobolev inequality then yield
\[
\iint b\cdot D\rho\ \rho\,dxdt\leq \|b\|_{L^{n+2}_{x,t}}\|D\rho\|_{L^2_{x,t}}\|\rho\|_{L^{\frac{2(n+2)}{n}}_{x,t}}\lesssim  \|b\|_{L^{n+2}_{x,t}}\|D\rho\|_{L^2_{x,t}}^2,
\]
which shows that one can deduce a priori $L^2$ estimates and the well-posedness of the equation in the parabolic class
\[
\mathcal{H}_2^1(Q_T):=\{u\in L^2(0,T;H^1(\Omega)),\ \partial_t u\in L^2(0,T;H^{-1}(\Omega))\}.
\]
\item[(d)] The other known conditions under which the well-posedness holds concern integrability conditions on the drift vector field or its derivatives against the solution, such as $b\in L^k(\rho\,dxdt)$ or $\mathrm{div}(b)\in L^k(\rho\,dxdt)$ for some $k>1$. These were deeply investigated in \cite{BKRS,PorrettaARMA,CirantGoffiAPDE,MetafuneJFA} and they have strong connections with stochastic control, being important for applications in Mean Field Games theory.
\end{itemize}
Our purpose in this paper is to investigate two new conditions under which a priori estimates and the uniqueness of solutions for \eqref{fp}-\eqref{dd} can be achieved, and apply them to the study of continuous dependence estimates and uniqueness of Hamilton-Jacobi equations. A common feature of our results is that our velocity field will not be divergence-free, which often appears and it is motivated by fluid dynamics settings.\\

First, we will investigate an intermediate regime that lies at the crossroad between (a) and (c) and exploits instead the regularizing effect of the heat operator. In fact, the term \eqref{eq:intbDrr}, after an integration by parts, gives
\[
\iint_{\T^n\times(0,T)} b\cdot D\rho\ \rho\,dxdt=-\iint_{\T^n\times(0,T)} \mathrm{div}(b)\frac{\rho^2}{2}\,dxdt
 \]
and thus can be also handled directly via the H\"older inequality and estimated by
\[
\iint_{\T^n\times(0,T)} \mathrm{div}(b)\frac{\rho^2}{2}\,dxdt\lesssim \|\mathrm{div}(b)\|_{L^{\frac{n+2}{2}}}\|\rho\|^2_{L^{\frac{2(n+2)}{n}}}
\]
Recall that parabolic Sobolev embeddings, cf. \cite{LSU}, yield
\[
\|\rho\|^2_{L^{\frac{2(n+2)}{n}}}\lesssim \|\rho\|_{\mathcal{H}_2^1}^2.
\]
The condition
\[
\mathrm{div}(b)\in L^{\frac{n+2}{2}}
\]
can be seen as intermediate among (a) and (c). In fact, on the one hand the order of integrability has been lowered with respect to (c) (where $b$ is asked to belong to the higher Lebesgue space $L^{n+2}$), but, on the other hand, we are requiring an integrability information on its derivatives (weaker than (a)). As discussed before, the responsible of this different integrability regime is the presence of the diffusion in the equation. We will prove in this and more general cases, such as the weakened Aronson-Serrin interpolated condition in mixed Lebesgue spaces
\begin{equation}\label{lowAS}
\mathrm{div}(b)\in L^r_t(L^q_x)\text{ with }\frac{n}{2q}+\frac{1}{r}\leq 1,
\end{equation}
$L^p$-stability and uniqueness of certain weak solutions. Some local regularity properties and maximum principles appeared under related conditions, assuming $b\in L^{q}_{x,t}$, $\frac{n+2}{2}<q\leq n+2$, (i.e. weaker than (c)) and a sign on the divergence of the velocity field \cite{NazarovUraltseva}. The integrability condition \eqref{lowAS} is natural, since \eqref{fp} is invariant under the scaling
\[
u_\lambda(x,t)=u(\lambda x,\lambda^2 t), \qquad  b_{\lambda}(x,t)=\lambda b(\lambda x,\lambda^2 t)
\]
and the space $L^r_t(L^q_x)$ is invariant for the scaling of $\mathrm{div}(b_\lambda)$ when $\frac{n}{2q}+\frac{1}{r}= 1$.\\

One possible application of the a priori regularity results in this setting concerns Mean Field Games systems, a model case being
\[
\begin{cases}
-\partial_t u(x,t)-\Delta u(x,t)+\frac{|Du(x,t)|^2}{2}=f(m(x,t))&\text{ in }\T^n\times(0,T),\\
\partial_t m(x,t)-\Delta m(x,t)-\mathrm{div}(Du(x,t)\ m(x,t))=0&\text{ in }\T^n\times(0,T),\\
u(x,t)=u_T(x),\ m(x,0)=m_0(x)&\text{ in }\T^n.
\end{cases}
\]
In the case of power-like couplings $f(m)=m^\alpha$, if we know that $m^\alpha\in L^q_{x,t}$ for some $q$ (i.e. $m\in L^{q/\alpha}$), then by maximal $L^q$-regularity for Hamilton-Jacobi equations $\Delta u\in L^q_{x,t}$ for some $q>(n+2)/2$, cf. \cite{CirantGoffiAPDE}. Setting $-b=Du$ and noting that $\mathrm{div}(b)=\Delta u$, we have that $\mathrm{div}(b)\in L^q_{x,t}$ for $q>(n+2)/2$. Our estimates, cf. Theorem \ref{main2}, will imply $m\in L^\infty_t(L^p_x)$ for all $p>1$, which in turn boosts the regularity of the right-hand side of the first equation. \\
We mention that the well-posedness of transport-diffusion equations for $b$ satisfying  \eqref{lowAS} was pointed out by P.-L. Lions in his lectures at Coll\'ege de France \cite{LionsSeminar}, see also \cite[Section 1.3.3]{LeBrisLions}, when $b=b(x)$ is independent of the time variable. Our main development is a full treatment of uniqueness and stability of the parabolic case in mixed Lebesgue scales. Many extensions are possible in this scenario: for instance, similar results hold for more general diffusions in divergence form $-\mathrm{div}(\sigma(x)\sigma^T(x)D\rho)$ for which a Sobolev inequality holds. This is the case of many hypoelliptic operators treated for instance in \cite[Section 3.5]{LeBrisLions}.

 The second condition on the velocity field we discuss is mainly related with the Ambrosio-Diperna-Lions setting (a) and it requires a one-side assumption on the velocity field which blows up in time: namely we consider continuous or Lipschitz solutions and impose
\[
\|[\mathrm{div}(b(t))]^-\|_{L^\infty(\T^n)}\lesssim \frac{c}{t}.
\]
This is reminiscent of Nagumo-type uniqueness criteria for ordinary differential equations \cite{Constantin10}, uniqueness and sup-norm contraction estimates for first-order, convex, Hamilton-Jacobi equations \cite{Douglis}, and reminds the Oleinik condition of conservation laws. In fact, the a priori estimates we derive do not depend on the viscosity $\eps$. A uniqueness result when the velocity field $b\in BV_x$ and whose $BV$ norm has a possible blow-up at $t=0$ can be found in \cite{ACFS}: this weaker condition can be imposed at the expenses of assuming an additional continuity of the solution in the time variable. The continuity of the solution in the time variable is essential in our argument too. We refer to \cite{GigaGiga} for a recent account on these topics. \\

We finally apply some of the estimates for drift-diffusion PDEs to address continuous dependence estimates for viscous Hamilton-Jacobi equations. These are crucial in the study of many quantitative properties for Hamilton-Jacobi PDEs, ranging from the regularity theory to vanishing viscosity and homogenization, cf. \cite{CaffarelliSouganidis}. To our knowledge, they started in \cite{SouganidisJDE} for first-order PDEs via doubling of variable methods. The derivation of properties for Hamilton-Jacobi equations by the study of their linearizations \eqref{dd} or their adjoint is not new, see e.g. \cite{Silvestre} and \cite{EvansARMA}, but they have never been applied to obtain continuous dependence estimates. Here we prove the following model estimate for solutions of \eqref{HJ} (even in the case when $H(x,t,p)$ depends on $x$) via duality methods: if $u_1,u_2$ are two solutions of \eqref{HJ} with right-hand side $f_1,f_2\in L^1_t L^{\infty}_x$ and terminal data $g_1,g_2 \in L^{\infty}$, we have
\begin{equation}\label{continuous}
\|(u_1-u_2)(t)\|_{L^{\infty}_{x,t}}\leq \|g_1-g_2\|_{L^\infty}+\int_0^t\|(f_1-f_2)(s)\|_{L^\infty_x}\,ds,\ t\in(0,T).
\end{equation}
The main feature of our estimate is that it is obtained without using the maximum principle. The novelties of our integral approach rely on the fact that all the constants are explicit and do not depend on the viscosity. It also allows to deduce new higher-order continuous dependence estimates and address $L^p$ bounds $\forall  \, p \in [1, +\infty]$. These latter scales are more familiar within the theory of scalar conservation laws and conservative equations \cite{Otto,Evans2007} and are not natural within the theory of viscosity solutions, which in turn is based on sup-norm techniques and the maximum principle. This also provides a new proof of uniqueness of solutions for Hamilton-Jacobi equations without exploiting the theorem on sums in the theory of viscosity solutions, a fact which we believe can be of independent interest. In particular, some consequences of our theoretical results on advection-diffusion PDEs are a uniqueness and stability property for solutions of Hamilton-Jacobi equations satisfying the condition 
\[
\Delta u\leq g(x,t)\in L^r_t(L^q_x)\text{ with }\frac{n}{2q}+\frac{1}{r}\leq 1,
\]
cf. Theorem \ref{th:ii}. These follow by duality via the stability result for Fokker-Planck equations under the condition \eqref{lowAS}, and address a question left open in \cite[Remark 3.6]{L82Book}; see Remark \ref{Lio}. This slightly weakens the requirement guaranteeing the uniqueness of solutions obtained by A. Douglis \cite{Douglis} and P. L. Lions \cite[Theorem 3.1]{L82Book}, where one needs $D^2u\leq (c_1/t+c_2)\mathbb{I}_n$ (semiconcavity for positive times) and $\Delta u\leq k$ respectively ($L^\infty$ semi-superharmonicity).
As a second progress, our approach is flexible enough to provide estimates of the form \eqref{continuous} for quite general problems posed on unbounded domains equipped with Neumann conditions, where few results are known (see e.g. \cite{Cockburnetal}), as well as in the whole space case. The technique has the potential to be applied for the derivation of continuous dependence estimates for degenerate diffusions (e.g. driven by subelliptic operators) or nonlocal operators (e.g. patterned over the fractional Laplacian) since the estimates depend weakly on the second order term. As a final contribution in this setting, we provide new continuous dependence and uniqueness results for time-dependent Hamilton-Jacobi equations with nonlinear terms having power-growth; see \cite{Barles} for recent developments via the theory of viscosity solutions.\\

\textit{Outline}. In Section \ref{sec;div1} we study $L^p$ stability estimates for Fokker-Planck equations under the condition $\mathrm{div}(b)\in L^r_t(L^q_x)$. In Section \ref{sec;div2} we address the case of transport-diffusion equations with $[\mathrm{div}(b(t))]^-\in L^\infty_x$ having a possible blow-up at $t=0$. In Section \ref{sec;HJ} we apply the previous results to obtain continuous dependence estimates for solutions of Hamilton-Jacobi equations.

\section{The case of vector fields with divergence in  mixed Lebesgue spaces}\label{sec;div1}
\subsection{$L^p$ a priori estimates and uniqueness}
Consider
\begin{equation}
\label{eq:fp}
\begin{cases}
\partial_t \rho-\Delta \rho+\mathrm{div}(b(x,t)\rho)=0 \quad & in \quad  Q_T:= \mathbb{T}^n \times (0,T), \\
\rho(x,0)=\rho_0(x) &in \quad  \mathbb{T}^n,
\end{cases}
\end{equation}
where $\T^n\equiv \R^n/ \Z^n$ and we set $\eps=1$, $n\geq2$, and let
\[
\mathcal{H}_2^1(Q_T):=\{u\in L^2(0,T;H^1(\T^n)),\ \partial_t u\in L^2(0,T;H^{-1}(\T^n))\}.
\]
Recall that the next estimates exploit the diffusion of the PDE and hence will also depend on the viscosity $\eps$.
\begin{thm}\label{stability}
Assume that
\begin{equation}\label{divb}
\mathrm{div}(b)\in L^r(0,T;L^q(\T^n)),
\frac{n}{2q}+\frac{1}{r}\leq 1,\text{ with } \begin{cases}
r\in[1,\infty)\text{ and }q\in(n/2,\infty]\text{ for }n\geq2\\
r\in[1,2]\text{ and }q\in[1,\infty]\text{ for }n=1.
\end{cases}
\end{equation}
Then every solution $\rho$ in the parabolic class $\mathcal{H}_2^1(Q_T)$ of \eqref{eq:fp} satisfies the estimate
\begin{equation}
\label{eq:L2estimate}
\|\rho(\cdot,t)\|_{L^2(\T^n)}\leq C_1 \|\rho_0\|_{L^2(\T^n)},
\end{equation}
where $C_1$ depends on $n,q,r,\|\mathrm{div}(b)\|_{L^r_t(L^q_x)}$. Morevoer, we have 
\begin{equation}
\label{eq:L2estgradient}
\int_0^T \int_{\mathbb{T}^n} |D\rho(x,t)|^2 dx dt \le C_2 \|\rho_0\|_{L^2(\T^n)},
\end{equation}
where $C_2$ depends on $n,q,r,\|\mathrm{div}(b)\|_{L^r_t(L^q_x)}, T$.
\end{thm}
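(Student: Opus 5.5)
We plan to run a standard energy argument, testing \eqref{eq:fp} with $\rho$ itself, and then use the Sobolev embedding on $\T^n$ to absorb the divergence term. Since $\rho\in\mathcal{H}^1_2(Q_T)$, the Lions--Magenes lemma gives $\rho\in C([0,T];L^2(\T^n))$ and
\[
\frac12\frac{d}{dt}\|\rho(t)\|_{L^2}^2=\langle\partial_t\rho,\rho\rangle .
\]
Testing the equation with $\rho$ and integrating by parts the convection term as in the introduction, $\int b\rho\cdot D\rho=\int b\cdot D(\rho^2/2)=-\frac12\int \mathrm{div}(b)\rho^2$, we obtain for a.e. $t$
\[
\frac12\frac{d}{dt}\|\rho(t)\|_2^2+\|D\rho(t)\|_2^2=-\frac12\int_{\T^n}\mathrm{div}(b)\rho^2\,dx\le \frac12\|\mathrm{div}(b)(t)\|_{L^q}\|\rho(t)\|_{L^{2q'}}^2 .
\]
To make this rigorous with only $\mathrm{div}(b)\in L^rL^q$ (and $b\rho$ merely a distribution to be paired with $D\rho$), we would first mollify $b$ in $x$, or equivalently use the weak formulation with the assumption that $b\rho\in L^1$ makes the pairing meaningful. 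The identity above then passes to the limit because $\mathrm{div}(b_\delta)\to\mathrm{div}(b)$ in $L^rL^q$ and $\rho^2\in L^{r'}L^{q'}$, which follows from the next step.

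The key step is an interpolation. Let $2^*$ be the Sobolev exponent ($2^*=2n/(n-2)$ for $n\ge3$, any finite exponent for $n=2$, and $\infty$ for $n=1$). The condition $q>n/2$ gives $2\le 2q'<2^*$. Gagliardo--Nirenberg on the torus, applied in the inhomogeneous form $\|\rho\|_{2q'}\le C\|\rho\|_2^{1-\theta}(\|D\rho\|_2+\|\rho\|_2)^{\theta}$ with $\theta=n/(2q)$, then yields
\[
\|\mathrm{div}\, b\|_q\|\rho\|_{2q'}^2\le C\|\mathrm{div}\, b\|_q\|\rho\|_2^{2-2\theta}\big(\|D\rho\|_2^{2\theta}+\|\rho\|_2^{2\theta}\big).
\]
When $\theta<1$, Young's inequality with exponents $1/\theta$ and $1/(1-\theta)$ gives
\[
\le \|D\rho\|_2^2+C\big(\|\mathrm{div}\, b\|_q^{1/(1-\theta)}+\|\mathrm{div}\, b\|_q\big)\|\rho\|_2^2 .
\]
The condition $n/(2q)+1/r\le1$ means exactly $1/(1-\theta)\le r$, so the coefficient $a(t)$ belongs to $L^1(0,T)$. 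We then absorb the gradient term and apply Gronwall: the inequality $\frac{d}{dt}\|\rho\|_2^2+\|D\rho\|_2^2\le a(t)\|\rho\|_2^2$ gives \eqref{eq:L2estimate} with $C_1=\exp(\frac12\|a\|_{L^1})$. Integrating in time afterwards bounds $\int_0^T\|D\rho\|_2^2$ by $(1+\|a\|_{L^1}C_1^2)\|\rho_0\|_2^2$, which is \eqref{eq:L2estgradient}; the right-hand side there is naturally quadratic in $\|\rho_0\|_{L^2}$. For $n=1$ one uses $\theta=1/(2q)\le 1/2$ together with $r\ge2$, or treats the admissible range similarly.

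The main obstacle is the critical case $n/(2q)+1/r=1$ with $r<\infty$. There $\|\mathrm{div}\,b\|_q^{r}$ is merely integrable, so the Young step is exactly borderline. Unless we want to count the scale-invariant structure, the bound still closes, because $a\in L^1$ is all Gronwall needs. The true borderline is $r=\infty$, $q=n/2$ ($\theta=1$), which is excluded by $q>n/2$ in \eqref{divb}; there one would need smallness. The endpoint $r=1$, $q=\infty$ ($\theta=0$) is the classical ADL-type Gronwall estimate. The other delicate point is justifying the test with $\rho$ and the integration by parts at this low regularity, and we would handle it through the regularization described above.
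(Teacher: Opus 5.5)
Your proposal is correct and is essentially the paper's proof: test with $\rho$, rewrite the drift term as $-\frac12\int\mathrm{div}(b)\rho^2$, apply H\"older in $L^q$--$L^{q'}$, use the inhomogeneous Gagliardo--Nirenberg inequality with $\theta=n/(2q)$ (the paper's relation $\frac{1}{2q'}=\frac12-\frac{\theta}{n}$), then Young, absorption and Gronwall, and finally integrate in time for the gradient bound. Two of your remarks agree with what the paper's computation actually yields: the non-critical case only needs $1/(1-\theta)\le r$, which also covers the whole stated $n=1$ range, so your extra requirement $r\ge 2$ there is unnecessary; and the gradient bound is naturally quadratic in $\|\rho_0\|_{L^2}$.
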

\begin{proof}
We start by multiplying by $\rho$ the PDE and integrate in space $\T^n$. This implies
\begin{equation}
\label{eq:I}
\frac{d}{dt}\|\rho(t)\|_{L^2(\T^n)}^2+\int_{\T^n}|D\rho(t)|^2\,dx=-\int_{\T^n}\mathrm{div}(b)\frac{\rho^2}{2}\,dx=:I
\end{equation}
By the H\"older inequality
\[
I\leq \|\rho(t)\|_{L^{2q'}(\T^n)}^2\|\mathrm{div}(b(t))\|_{L^q(\T^n)}.
\]
We apply the Gagliardo-Nirenberg-Ladyzhenskaya inequality \cite[Remark 5, p.126]{Ni59} for $\theta \in (0,1)$ and find
\[
\|\rho(t)\|_{L^{2q'}(\T^n)}^2\|\mathrm{div}(b(t))\|_{L^q(\T^n)}\leq C_S\|\mathrm{div}(b(t))\|_{L^q(\T^n)}(\|D\rho(t)\|_{L^2(\T^n)}^{2\theta}\|\rho(t)\|_{L^2(\T^n)}^{2(1-\theta)}+\|\rho(t)\|_{L^2(\T^n)}^2)
\]
where
\begin{equation}\label{GN}
\frac{1}{2q'}=\frac12-\frac{\theta}{n}.
\end{equation}
The Young's inequality gives
\[
I\leq C_S (1-\theta)\left(\|\mathrm{div}(b(t))\|_{L^q(\T^n)}\|\rho(t)\|_{L^2(\T^n)}^{2(1-\theta)}\right)^{\frac{1}{1-\theta}}+\theta \|D\rho(t)\|_{L^2(\T^n)}^2+C_S\|\mathrm{div}(b(t))\|_{L^q(\T^n)}\|\rho(t)\|_{L^2(\T^n)}^2.
\]
Therefore
\begin{equation}
\label{eq:dis01}
\begin{aligned}
\frac{d}{dt}\|\rho(t)\|_{L^2(\T^n)}^2+(1-\theta)\int_{\T^n}|D\rho(t)|^2\,dx\leq  &(1-\theta)C_S\left(\|\mathrm{div}(b(t))\|_{L^q(\T^n)}\|\rho(t)\|_{L^2(\T^n)}^{2(1-\theta)}\right)^{\frac{1}{1-\theta}}\\
&+C_S\|\mathrm{div}(b(t))\|_{L^q(\T^n)}\|\rho(t)\|_{L^2(\T^n)}^2.
\end{aligned}
\end{equation}
Since the second term on the left-hand side is nonnegative, we get
\[
\frac{d}{dt}\|\rho(t)\|_{L^2(\T^n)}^2\leq  (1-\theta)C_S\|\mathrm{div}(b(t))\|_{L^q(\T^n)}^{\frac{1}{1-\theta}}\|\rho(t)\|_{L^2(\T^n)}^2+C_S\|\mathrm{div}(b(t))\|_{L^q(\T^n)}\|\rho(t)\|_{L^2(\T^n)}^2
\]
which gives the $L^2$ estimate  \eqref{eq:L2estimate} by the Gronwall inequality, provided that the following integrability on the velocity field holds
\begin{equation}
\label{eq:qrint}
\int_0^T\|\mathrm{div}(b(t))\|_{L^q(\T^n)}^{\frac{1}{1-\theta}}\,dt<\infty.
\end{equation}
Set 
\[
r= \frac{1}{1-\theta}
\]
and observe that by \eqref{GN}
\[
\frac{n}{2q}+\frac{1}{r} = \frac{n}{2}-\frac{n}{2q'}+1-\theta=1.
\]
Once we have that the estimate \eqref{eq:L2estimate}, going back to \eqref{eq:dis01} and integrating in time we get 
\begin{align*}
&(1-\theta) \int_0^T \int_{\T^n}|D\rho(t)|^2 \ dt \\
& \le  \left( \int_0^T   (1-\theta)C_S\|\mathrm{div}(b(t))\|_{L^q(\T^n)}^{\frac{1}{1-\theta}}+C_S\|\mathrm{div}(b(t))\|_{L^q(\T^n)} \ dt \right)   C \|\rho_0\|_{L^2(\T^n)}^2 .
\end{align*}
Finally by \eqref{eq:qrint} we obtain \eqref{eq:L2estgradient}.
\end{proof}

\begin{rem}
An inspection of the proof suggests that the condition $\mathrm{div}(b)\in L^r(0,T;L^q(\T^n))$ can be weakened to $ [\mathrm{div}(b)]^- \in L^r(0,T;L^q(\T^n))$ with  $\frac{n}{2q}+\frac{1}{r}\leq 1$. Indeed, going back to  the equation \eqref{eq:I} we have that  
\[
I\le \int_{\T^n}[\mathrm{div}(b)]^-  \frac{\rho^2}{2}\,dx.
\]
\end{rem}

\begin{cor}\label{uniquenessFP}
Assume that \eqref{divb} holds. If there exists  a solution $\rho$ in the parabolic class $\mathcal{H}_2^1(Q_T)$ of \eqref{eq:fp}, then $\rho$ is unique.
\end{cor}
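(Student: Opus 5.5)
The plan is to exploit the linearity of \eqref{eq:fp} and reduce uniqueness to the a priori bound \eqref{eq:L2estimate} of Theorem \ref{stability}. Suppose $\rho_1,\rho_2\in\mathcal{H}_2^1(Q_T)$ both solve \eqref{eq:fp} with the same initial datum $\rho_0$. Set $w:=\rho_1-\rho_2$. Since the equation is linear in $\rho$ and $\mathcal{H}_2^1(Q_T)$ is a vector space, $w$ belongs to $\mathcal{H}_2^1(Q_T)$ and is a (weak) solution of
\[
\partial_t w-\Delta w+\mathrm{div}(b\,w)=0\ \text{ in } Q_T,\qquad w(x,0)=0\ \text{ in }\T^n .
\]
The velocity field is unchanged, so \eqref{divb} still holds.

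Next I apply Theorem \ref{stability} to $w$, which gives, for every $t\in(0,T)$,
\[
\|w(\cdot,t)\|_{L^2(\T^n)}\leq C_1\|w(\cdot,0)\|_{L^2(\T^n)}=0 .
\]
Hence $w(\cdot,t)=0$ a.e. in $\T^n$ for every $t$, so $\rho_1=\rho_2$ a.e. in $Q_T$. In addition, \eqref{eq:L2estgradient} gives $Dw=0$, although this is not needed.

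The only point needing care, and the main obstacle, is to justify that the energy identity \eqref{eq:I} used in the proof of Theorem \ref{stability} is legitimate for a generic element of $\mathcal{H}_2^1(Q_T)$, including the meaning of the initial trace $w(\cdot,0)$. I would handle this with the standard Lions--Magenes fact: if $w\in L^2(0,T;H^1)$ and $\partial_t w\in L^2(0,T;H^{-1})$, then $w\in C([0,T];L^2(\T^n))$, and $t\mapsto\|w(t)\|_{L^2}^2$ is absolutely continuous with derivative $2\langle\partial_t w,w\rangle$. This makes the initial condition meaningful and allows testing the equation with $w$ itself.

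It also remains to check that the convective term is well defined when tested with $w$, namely that $\int \mathrm{div}(b)\,w^2$ is finite. This follows from the H\"older and Gagliardo--Nirenberg bounds already used in the proof of Theorem \ref{stability}, since $w\in L^2_tH^1_x\cap L^\infty_tL^2_x$ and therefore $w^2\in L^{r'}_t L^{q'}_x$ under \eqref{divb}. The integration by parts $\int b\cdot Dw\,w=-\tfrac12\int\mathrm{div}(b)w^2$ is interpreted in this weak sense; if needed, I would justify it by mollifying $w$ in space and passing to the limit using exactly these bounds. Once this is settled, Gronwall's inequality with zero initial datum concludes the argument.
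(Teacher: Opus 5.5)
Your proposal is correct and is exactly the argument implicit in the paper, which states the corollary without proof: by linearity the difference of two $\mathcal{H}_2^1$ solutions solves \eqref{eq:fp} with zero initial datum, and \eqref{eq:L2estimate} forces it to vanish. Your extra paragraph justifying \eqref{eq:I} is not needed once Theorem \ref{stability} is taken as given for every $\mathcal{H}_2^1$ solution. If you did want to carry it out, note that passing to the limit in $\int b\cdot w_\delta Dw_\delta$ requires some integrability of $b$ itself, not only of $\mathrm{div}(b)$.
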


\begin{thm}\label{main2}
Any nonnegative solution to \eqref{eq:fp} satisfies the estimate
\[
\|\rho(\cdot,t)\|_{L^{p}(\T^n)}\leq C\|\rho_0\|_{L^{p}(\T^n)},\ p\in(1,\infty),
\]
with $C$ depending on $n,q,r,p,\|\mathrm{div}(b)\|_{L^r_t(L^q_x)}$.
\end{thm}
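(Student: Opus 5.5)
The natural plan is to repeat the energy argument of Theorem \ref{stability} with the test function $\rho^{p-1}$ in place of $\rho$; nonnegativity of $\rho$ makes this test function admissible and sign-definite. Multiplying \eqref{eq:fp} by $\rho^{p-1}$ and integrating over $\T^n$, the diffusion term gives
\[
(p-1)\int_{\T^n}\rho^{p-2}|D\rho|^2\,dx=\frac{4(p-1)}{p^2}\int_{\T^n}|D(\rho^{p/2})|^2\,dx .
\]
For the drift term we integrate by parts twice:
\[
\int_{\T^n}\mathrm{div}(b\rho)\rho^{p-1}\,dx=-(p-1)\int_{\T^n} \rho^{p-1} b\cdot D\rho\,dx=-\frac{p-1}{p}\int_{\T^n} b\cdot D(\rho^p)\,dx=\frac{p-1}{p}\int_{\T^n}\mathrm{div}(b)\,\rho^p\,dx .
\]
Setting $w=\rho^{p/2}$, this yields
\[
\frac1p\frac{d}{dt}\|w(t)\|_{L^2}^2+\frac{4(p-1)}{p^2}\|Dw(t)\|_{L^2}^2=-\frac{p-1}{p}\int_{\T^n}\mathrm{div}(b)\,w^2\,dx ,
\]
which has exactly the same structure as \eqref{eq:I} with $\rho$ replaced by $w$.

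Next we run the estimate from the proof of Theorem \ref{stability} on $w$. By H\"older, the right-hand side is bounded by $\frac{p-1}{p}\|\mathrm{div}(b(t))\|_{L^q}\|w\|_{L^{2q'}}^2$. The Gagliardo--Nirenberg inequality with $\theta$ given by \eqref{GN} then bounds this by terms involving $\|Dw\|_{L^2}^{2\theta}\|w\|_{L^2}^{2(1-\theta)}$ and $\|w\|_{L^2}^2$. Young's inequality with a small weight $\delta$, chosen so that the gradient term is absorbed into $\frac{4(p-1)}{p^2}\|Dw\|^2_{L^2}$, gives
\[
\frac{d}{dt}\|w\|_{L^2}^2\le C(p,n,q)\big(\|\mathrm{div}(b(t))\|_{L^q}^{r}+\|\mathrm{div}(b(t))\|_{L^q}\big)\|w\|_{L^2}^2,
\]
with $r=1/(1-\theta)$ as before. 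When $\frac{n}{2q}+\frac1r<1$ we use the Lebesgue embedding on the bounded domain $Q_T$ (finite measure) to reduce to the critical exponent. Gronwall then gives $\|\rho(t)\|_{L^p}^p=\|w(t)\|_{L^2}^2\le e^{C\int_0^T(\dots)}\|\rho_0\|_{L^p}^p$. Taking $p$-th roots gives the claim, with $C$ depending on $n,q,r,p$ and $\|\mathrm{div}(b)\|_{L^r_tL^q_x}$. The constant degenerates as $p\to1$ or $p\to\infty$ through the factor $p^2/(4(p-1))$ raised to the power $\theta/(1-\theta)$, which is why the statement restricts to $p\in(1,\infty)$.

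The main obstacle is justification rather than the computation. One must ensure that $\rho^{p-1}$ is an admissible test function and that $w=\rho^{p/2}\in L^2_tH^1_x$, which does not follow from $\rho\in\mathcal{H}^1_2$ alone when $p>2$. I would handle this with truncation: test with $T_k(\rho)^{p-1}$ where $T_k(s)=\min(s,k)$, or with a regularized power $(\rho+\eta)^{p-1}$ cut off at height $k$. Each step above then holds for the truncated quantities with constants independent of $k$; the only extra terms come from the region $\{\rho>k\}$ and carry a favorable sign or vanish in the limit. Letting $k\to\infty$ via monotone convergence and Fatou then concludes. Alternatively, one can first smooth $b$ and $\rho_0$, obtain the bound with constants depending only on the stated norms, and pass to the limit using the uniqueness in Corollary \ref{uniquenessFP}.
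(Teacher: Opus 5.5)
Your proof is correct. For $p\ge 2$ it is the paper's argument: the paper tests \eqref{eq:fp} with $\rho^{2p-1}$, which is your $\rho^{p-1}$ after relabeling $p\mapsto 2p$. It then reruns the Gagliardo--Nirenberg/Young/Gronwall chain of Theorem \ref{stability} on $\rho^{p}$.

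The routes differ for $p\in(1,2)$.
\begin{itemize}
\item The paper runs no energy estimate there. It interpolates the $L^2$ bound with conservation of mass, $\|\rho(t)\|_{L^1}=1$. Read literally (H\"older for a single solution), this only gives $\|\rho(t)\|_{L^p}\le (C_1\|\rho_0\|_{L^2})^{\alpha}$. To obtain $C\|\rho_0\|_{L^p}$, the step must be read as Riesz--Thorin for the linear map $\rho_0\mapsto\rho(t)$. That needs the $L^1$ contraction for signed data, hence positivity preservation and uniqueness (Corollary \ref{uniquenessFP}).
\item Your direct test with a regularized $(\rho+\eta)^{p-1}$ avoids all of this and treats every $p\in(1,\infty)$ uniformly. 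The price is the singular weight $\rho^{p-2}$. The extra term $\eta\int_{\T^n}\mathrm{div}(b)(\rho+\eta)^{p-1}\,dx$ it produces is dominated by $|\mathrm{div}(b)|(1+\rho^2)$ for $\eta\le1$, $p\le 2$. This is integrable on $Q_T$ for $\rho\in\mathcal{H}_2^1$ by the H\"older/Gagliardo--Nirenberg bounds in the proof of Theorem \ref{stability}, so it vanishes as $\eta\to0$.
\end{itemize}
Your discussion of the admissibility of $\rho^{p-1}$ and of $\rho^{p/2}\in L^2_tH^1_x$ also addresses a point the paper passes over in silence.

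One side comment of yours is inaccurate: the constant does not degenerate as $p\to1$. The drift term carries the factor $\frac{p-1}{p}$. After Young's inequality, the Gronwall coefficient for $\|w\|_{L^2}^2$ is therefore of order $(p-1)\big(p^{\theta/(1-\theta)}\|\mathrm{div}(b(t))\|_{L^q}^{r}+\|\mathrm{div}(b(t))\|_{L^q}\big)$. After taking $p$-th roots, the constant tends to $1$ as $p\to1^+$, consistent with conservation of mass. It blows up only as $p\to\infty$, and only when $\theta>0$. So the exclusion of $p=1$ in the statement is not forced by your estimate.
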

\begin{proof}
It is enough to test the equation against $\rho^{2p-1}$, $p>1$, and proceed as above to get the inequality
\[
\frac{1}{2p}\frac{d}{dt}\||\rho(t)|^{p}\|_{L^2(\T^n)}^2+\int_{\T^n}|D\rho(t)|^2 (2p-1)\rho^{2(p-1)}\,dx\leq \int_{\T^n}|\mathrm{div}(b)|\frac{|\rho(t)|^{2p}}{2p}\,dx
\]
which gives
\[
\frac{d}{dt}\||\rho(t)|^p\|_{L^2(\T^n)}^2\leq \|\mathrm{div}(b(t))\|_{L^q(\T^n)}^{\frac{1}{1-\theta}}\||\rho(t)|^p\|_{L^2(\T^n)}^2+C_S\|\mathrm{div}(b(t))\|_{L^q(\T^n)}\||\rho(t)|^p\|_{L^2(\T^n)}^2. 
\]
This gives the $L^p$ stability for $p\geq2$. Since $\|\rho(t)\|_{1}=1$, by interpolation we get the estimate for any $p>1$.
\end{proof}

\begin{cor}
\label{cor:divLrLp}
Let $f \in L^1(\tau,T; L^p(\T^n))$, $p\in(1,\infty)$. Any solution to
\begin{equation}
\label{eq:dd}
\begin{cases}
-\partial_t v-\Delta v-b(x,t) \cdot Dv= f(x,t) \quad & in \quad  Q_T:= \mathbb{T}^n \times (\tau,T) \\
v(x,T)=v_T(x) &in \quad  \mathbb{T}^n,
\end{cases}
\end{equation}
with $v_T \in L^{p}(\mathbb{T}^n)$ such that \eqref{divb} holds satisfies the estimate
\begin{equation}
\label{eq:Lp}
\|v(\cdot,t)\|_{L^{p}(\T^n)}\leq C\left( \|v_T\|_{L^{p}(\T^n)} + \int_t^T \|f (x,s)\|_{L^p(\T^n)} ds \right),
\end{equation}
for all $t \in (\tau, T)$. 
\end{cor}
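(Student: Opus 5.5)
We argue by duality, pairing \eqref{eq:dd} with the Fokker--Planck equation \eqref{eq:fp} and using the $L^{p'}$ stability of Theorem~\ref{main2} for the adjoint. Fix $t\in(\tau,T)$ and a test datum $\rho_0\in C^\infty(\T^n)$ with $\|\rho_0\|_{L^{p'}}\le 1$, where $p'=p/(p-1)\in(1,\infty)$. Let $\rho$ solve
\[
\partial_s\rho-\Delta\rho+\mathrm{div}(b\rho)=0 \ \text{ in } \T^n\times(t,T),\qquad \rho(\cdot,t)=\rho_0,
\]
in the class $\mathcal H^1_2$. Corollary~\ref{uniquenessFP} gives uniqueness; existence would come from regularizing $b$ and passing to the limit with the bounds of Theorem~\ref{stability}. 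Multiplying \eqref{eq:dd} by $\rho$, integrating over $\T^n\times(t,T)$ and integrating by parts, the transport and diffusion terms cancel with those of the adjoint. Indeed, $-\int b\cdot Dv\,\rho=\int v\,\mathrm{div}(b\rho)$ matches $\partial_s\rho=\Delta\rho-\mathrm{div}(b\rho)$. This yields the representation formula
\[
\int_{\T^n} v(x,t)\rho_0(x)\,dx=\int_{\T^n} v_T(x)\rho(x,T)\,dx+\int_t^T\!\!\int_{\T^n} f\rho\,dx\,ds .
\]

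Next we bound the right-hand side using H\"older's inequality and Theorem~\ref{main2} applied with exponent $p'$ on the interval $(t,T)$. The constant depends only on $\|\mathrm{div}(b)\|_{L^r_t(L^q_x)}$, so it is uniform in $t$. This gives
\[
\Big|\int v(t)\rho_0\Big|\le \|v_T\|_{L^p}\|\rho(T)\|_{L^{p'}}+\int_t^T\|f(s)\|_{L^p}\|\rho(s)\|_{L^{p'}}\,ds\le C\Big(\|v_T\|_{L^p}+\int_t^T\|f(s)\|_{L^p}ds\Big).
\]
Taking the supremum over $\rho_0$ with $\|\rho_0\|_{L^{p'}}\le1$ then gives \eqref{eq:Lp}.

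The main obstacle is that Theorem~\ref{main2} is stated for \emph{nonnegative} solutions, and its proof uses mass conservation $\|\rho(t)\|_1=1$ to interpolate down to $p<2$. I would handle this as follows.
\begin{itemize}
\item Split $\rho_0=\rho_0^+-\rho_0^-$ and solve separately. By linearity and the weak maximum principle (which follows from the $L^2$ estimate of Theorem~\ref{stability} applied to the negative part), each piece stays nonnegative.
\item Each piece conserves mass, and the mass is bounded by $\|\rho_0\|_{L^{p'}}$ on the torus.
\item The estimate of Theorem~\ref{main2} is really linear in $\|\rho_0\|_{L^{p'}}$ after normalization: for $p'\ge2$ the testing argument is homogeneous, and for $p'<2$ one interpolates between $L^1$ and $L^{2}$. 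Alternatively, for $p'<2$ one can test directly with $|\rho|^{p'-2}\rho$, which is also homogeneous.
\end{itemize}

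The remaining point is the rigor of the duality pairing. If $v$ is only a weak solution, I would justify the integration by parts by mollifying $b$, $f$ and $v_T$. The constants are uniform under mollification, since $\|\mathrm{div}(b_\delta)\|_{L^r_tL^q_x}\le\|\mathrm{div}(b)\|_{L^r_tL^q_x}$. One then passes to the limit using uniqueness, in the spirit of Corollary~\ref{uniquenessFP}.
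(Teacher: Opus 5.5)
Your proposal is correct and follows the paper's own argument: duality with the Fokker--Planck solution started at the time where $v$ is evaluated, Theorem~\ref{main2} applied with the conjugate exponent $p'$, H\"older's inequality, and a supremum over the normalized initial datum. The only difference is cosmetic: the paper takes nonnegative data $\rho_\tau\ge0$ with $\|\rho_\tau\|_{L^{p'}}\le1$ and uses the resulting two-sided bound on $\int v(\tau)\rho_\tau\,dx$ to control both $v^+$ and $v^-$, which avoids your splitting $\rho_0=\rho_0^+-\rho_0^-$ and the maximum-principle step.
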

\begin{proof}
Let $1<p<+\infty$. Let $\rho$ be a non-negative solution of \eqref{eq:fp} with $\rho_\tau(x)\in L^{p'}$, $\rho_\tau\geq0$ and $\|\rho_\tau\|_{p'}\leq1$. Then using as a test function  $\rho$ in \eqref{eq:dd}, $v$ in \eqref{eq:fp} and subtracting  we gain 
\begin{equation}
\label{eq:corfe}
\frac{d}{dt} \int_{\mathbb{T}^n}  \rho(x,t) v(x,t) \ dx =\int_{\T^n} f(x,t) \rho(x,t) \ dx.
\end{equation}
Then we have via Theorem \ref{main2}
\begin{align*}
\int_{\mathbb{T}^n}  v(\tau)\rho(\tau) \ dx &=\int_{\mathbb{T}^n}  \rho(T) v(T) \ dx+  \int_{\tau}^T \int_{\T^n} f(x,t) \rho(x,t)   \ dx \\
&\le \|v(T)\|_{L^{p}} \|\rho(T)\|_{L^{p'}} +  \int_{\tau}^T  \|f(x,t)\|_{L^p(\T^n)} \|\rho(x,t)\|_{L^{p'}(\T^n)}   dt \\
&\le C\|v_T\|_{L^{p}}+ C  \int_{\tau}^T  \|f(x,t)\|_{L^p(\T^n)} dt.
\end{align*}
Moreover, we have
\[
\int_{\mathbb{T}^n}  \rho(T) v(T) \ dx\geq -C \|v_T\|_{L^{p}}
\]
and 
\[
  \int_{\tau}^T \int_{\T^n} f(x,t) \rho(x,t)   \ dx \geq -C  \int_{\tau}^T  \|f(x,t)\|_{L^p(\T^n)} dt.
\]
Coupling the previous terms and passing to the supremum over $\rho_\tau$ we get the statement \eqref{eq:Lp}.  
\end{proof}

\begin{rem}
\label{rm:AronsonSerrin}
On the other hand when  $b\in L^R(0,T;L^Q(\T^n))$ we are in the range $\frac{n}{2Q}+\frac{1}{R}\leq\frac12$ for $R\ge2$ and $Q>n$ by Theorem 2.1 and Remark 2.2 in \cite{Bianchini2017}, and we obtain the $L^p$ estimates as in Theorem \ref{main2}.
\end{rem}

\begin{rem}
The existence of solutions can be proved under further regularity conditions on $b$. If $b\in L^2(Q_\tau)$ one can prove the existence of entropy solutions \cite{BOP}. If, instead, $b\in L^{n+2}(Q_\tau)$ or, more generally, $b\in L^R(0,T;L^Q(\T^n))$ with $\frac{n}{2Q}+\frac{1}{R}\leq\frac12$ for $R\ge2$ and $Q\geq n$, the existence of energy solutions is well-known by \cite{DDaners2000,LSU}. The existence can be also achieved under rather different Sobolev assumptions on $b$, see \cite{LeBrisLions}.
\end{rem}

\section{Transport-diffusion equations with non-integrable one-side conditions on $\mathrm{div}(b)$}\label{sec;div2}
The next result shows a stability property for viscous transport equations under one-side conditions on $\mathrm{div}(b)$. This is well-known when $[\mathrm{div}(b)]^-\leq K(t)\in L^1(0,T)$, cf. \cite{CrippaThesis} and the references therein. Here we show that this can be achieved even when $[\mathrm{div}(b)]^-\lesssim K/t$. The next result is inspired by \cite{AronsonVazquez}.
\begin{thm}
\label{th:infinityest1t}
Let $\eps \ge 0$. Let $u_1,u_2$ be Lipschitz solutions of the viscous transport equation
\[
\partial_t u-\eps\Delta u-b(x,t)\cdot Du=f(x,t)\text{ in }Q_T:=\T^n\times(0,T)
\]
with initial conditions $g_1, g_2 \in L^{\infty}(\T^n)$ and $f\in L^1(0,T;L^\infty(\T^n))$, under the assumption that 
\begin{itemize}
\item $[\mathrm{div}(b)]^{-}\leq \dfrac{c_1}{t}+c_2 $,
\end{itemize}
where $c_1,c_2 \in \R$. Then
\[
\|(u_1-u_2)(t)\|_{L^\infty(\T^n)}\leq \|g_1-g_2\|_{L^\infty(\T^n)}.
\]
In particular, when $g_1=g_2$ we have  $u_1\equiv u_2$ on $Q_T$.\\
 If we have source terms $f_1, f_2\in L^1(0,T; L^{\infty}(\T^n))$ and $\|f_1- f_2\|_{L^1(0,T; L^{\infty}(\T^n))}>0$,  we have the estimate
\[
\|(u_1-u_2)(\tau)\|_{L^\infty(\T^n)}\leq  \exp \left( \int_{0}^{\tau} \|(f_1-f_2)(t)\|_{L^{\infty}(\T^n)} \ dt \right)  \left( \|g_1-g_2\|_{L^\infty(\T^n)}+1\right).
\]
\end{thm}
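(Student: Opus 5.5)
We will argue by $L^p$ energy estimates on the difference $w=u_1-u_2$ and then let $p\to\infty$. The idea, following \cite{AronsonVazquez}, is that the factor $1/p$ in the $L^p$ energy identity damps the non-integrable weight $c_1/t$ once $p$ is large.

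\emph{Energy identity.} The difference $w$ solves
\[
\partial_t w-\eps\Delta w-b\cdot Dw=f_1-f_2=:h,\qquad w(0)=g_1-g_2 .
\]
Since $w$ is Lipschitz, we may test this equation against $|w|^{p-2}w$ for $p\geq2$. The diffusion term gives $\eps(p-1)\int|w|^{p-2}|Dw|^2\geq0$, which we drop; this is why nothing depends on $\eps\ge0$. The transport term is
\[
-\int b\cdot D\Big(\frac{|w|^p}{p}\Big)\,dx=\frac1p\int \mathrm{div}(b)|w|^p\,dx .
\]
We obtain
\[
\frac1p\frac{d}{dt}\|w(t)\|_p^p\le \frac1p\int[\mathrm{div}(b)]^-|w|^p\,dx+\int|h||w|^{p-1}\,dx .
\]
Using $[\mathrm{div}(b)]^-\le c_1/t+c_2$ and H\"older's inequality on the torus (which has measure one), this becomes
\[
\frac{d}{dt}\|w\|_p\le \frac1p\Big(\frac{c_1}{t}+c_2\Big)\|w\|_p+\|h(t)\|_\infty .
\]

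\emph{Main obstacle: the singularity at $t=0$.} Gronwall's inequality applied from $0$ yields the factor $\exp\big(\tfrac1p\int_0^\tau(c_1/t+c_2)\,dt\big)$, which is infinite. We therefore integrate on $[s,\tau]$ with $0<s<\tau$ instead:
\[
\|w(\tau)\|_p\le \Big(\frac{\tau}{s}\Big)^{c_1/p}e^{c_2\tau/p}\Big(\|w(s)\|_p+\int_s^\tau\|h\|_\infty\,dt\Big).
\]
Now let $p\to\infty$ for fixed $s$. The prefactor tends to $1$, while $\|w(\tau)\|_p\to\|w(\tau)\|_\infty$ and $\|w(s)\|_p\le\|w(s)\|_\infty$. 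Then let $s\to0$, using the continuity of $w$ in time (here the Lipschitz regularity is essential), to get
\[
\|w(\tau)\|_\infty\le\|g_1-g_2\|_\infty+\int_0^\tau\|h\|_\infty\,dt .
\]
With $h=0$ this is exactly the stated contraction, and uniqueness follows.

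\emph{Source terms.} The additive bound just obtained is already stronger than the claimed estimate: with $a=\|g_1-g_2\|_\infty$ and $F=\int_0^\tau\|f_1-f_2\|_\infty\,dt$,
\[
a+F\le (a+1)(1+F)\le (a+1)e^{F}.
\]
Alternatively one can mimic the authors' form by absorbing the source through Young's inequality, $|h||w|^{p-1}\le \|h\|_\infty(|w|^p+1)$, and applying Gronwall's inequality. The only delicate points are justifying the test function for Lipschitz $w$ with $b$ rough (which we take as implicitly assumed by the weak formulation) and the order of the limits, $p\to\infty$ first and then $s\to0$.
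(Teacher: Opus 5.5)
Your proof is correct, and its skeleton is the paper's. Both use the Aronson--V\'azquez device: the factor $1/p$ in the $L^p$ energy inequality damps the non-integrable weight $c_1/t$, Gronwall is applied only on $[\sigma,\tau]$ with $\sigma>0$, and one lets $p\to\infty$ first and then $\sigma\to0$ by continuity in time.

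The differences are in the bookkeeping, and they work in your favour. The paper treats $(u_1-u_2)^+$ and $(u_1-u_2)^-$ separately, invoking a Kato-type inequality for the positive part. It absorbs the source via Young's inequality $w^{p-1}\le\frac{p-1}{p}w^p+\frac1p$, which gives a linear Gronwall inequality for $\int w^p$. After taking $p$-th roots, the term $\bigl(\int_\sigma^\tau\|f_1-f_2\|_{L^\infty}\,dt\bigr)^{1/p}$ tends to $1$. That is where the exponential factor, the ``$+1$'' and the case split $f_1=f_2$ versus $f_1\neq f_2$ come from.

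You instead test with $|w|^{p-2}w$, which is $C^1$ in $w$ for $p\ge2$ and handles both signs at once. You then use H\"older to get a differential inequality for $\|w\|_p$ itself, in which the source enters linearly. This yields the additive estimate $\|w(\tau)\|_\infty\le\|g_1-g_2\|_\infty+\int_0^\tau\|f_1-f_2\|_\infty\,dt$. That estimate implies the stated bound and contains the contraction as the case $f_1=f_2$. It is also exactly the form of continuous dependence estimate the paper announces in its introduction.

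The only cost is the division by $\|w\|_p^{p-1}$. Where the norm vanishes this needs the routine regularization $(\|w\|_p^p+\delta)^{1/p}$ with $\delta\to0$, which the paper's linear Gronwall for $\int w^p$ avoids. If one-sided (comparison) information is wanted, as the paper's positive-part argument provides, your computation goes through unchanged with the test function $(w^+)^{p-1}$.
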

\begin{proof}
Take $w=(u_1-u_2)^+$ (hence $w(0)=u_1(0)-u_2(0)=g_1-g_2$) by \cite[Section 8]{LeonariPeralPrimoSoria}, \cite[Lemma 7.6]{GTbook} and by linearity
\[
\partial_t w-\eps\Delta w-b(x,t)\cdot Dw\leq \text{sign}^+ (w) (f_1-f_2)\text{ in }\T^n\times(0,T)
\]
Therefore, multiplying the equation by $w^{p-1}$ and integrating in space we get
\begin{align*}
\frac1p\frac{d}{dt}\int_{\T^n}w^p(t)\,dx&\leq -\frac{1}{p}\int_{\T^n}\mathrm{div}(b(t))w^p(t)\,dt + \|f_1-f_2\|_{L^{\infty}(\T^n)} \int_{\T^n} \text{sign}^+ (w) w^{p-1} \\
&\leq \frac{1}{p}\int_{\T^n} [\mathrm{div}(b(t))]^{-} w^p(t)\,dt + \|f_1-f_2\|_{L^{\infty}(\T^n)} \int_{\T^n} \text{sign}^+ (w) w^{p-1} \\
& \leq  \frac{1}{p}\left( \frac{c_1}{t}+c_2 \right) \int_{\T^n}w^p(t)\,dx + \|f_1-f_2\|_{L^{\infty}(\T^n)} \tfrac{p-1}{p} \int_{\T^n} w^p(t)\,dx+  \tfrac{1}{p}\|f_1-f_2\|_{L^{\infty}(\T^n)}\\
& \leq  \frac{1}{p}\left( \frac{c_1}{t}+c_2+ \|f_1-f_2\|_{L^{\infty}(\T^n)} (p-1) \right) \int_{\T^n}w^p(t)\,dx + \tfrac{1}{p}\|f_1-f_2\|_{L^{\infty}(\T^n)}.
\end{align*}
Integrating between $0<\sigma<\tau<T$ and setting $h(t)=\int_{\T^n}w^p(t)\,dx$ we get by the Gronwall inequality
\[
h(\tau)\leq \left(\frac{\tau}{\sigma}\right)^{c_1} \exp (c_2 T) \exp \left((p-1) \int_{\sigma}^{\tau} \|(f_1-f_2)(t)\|_{L^{\infty}(\T^n)} \ dt \right)  \,  \left( h(\sigma)+ \int_{\sigma}^{\tau} \|(f_1-f_2)(t)\|_{L^{\infty}(\T^n)} \ dt \right).
\]
Therefore
\begin{align*}
\left(\int_{\T^n}w^p(\tau)\,dx\right)^\frac1p&\leq \left(\frac{\tau}{\sigma}\right)^{\frac{c_1}{p}} \exp(c_2T)^{\frac{1}{p}}  \exp \left((p-1) \int_{\sigma}^{\tau} \|(f_1-f_2)(t)\|_{L^{\infty}(\T^n)} \ dt \right)^{\frac{1}{p}} \\& \quad \cdot \left(\int_{\T^n}w^p(\sigma)\,dx  +\int_{\sigma}^{\tau} \|(f_1-f_2)(t)\|_{L^{\infty}(\T^n)} \ dt\right)^\frac1p \\
&\leq \left(\frac{\tau}{\sigma}\right)^{\frac{c_1}{p}} \exp(c_2T)^{\frac{1}{p}}  \exp \left((p-1) \int_{\sigma}^{\tau} \|(f_1-f_2)(t)\|_{L^{\infty}(\T^n)} \ dt \right)^{\frac{1}{p}} \\& \quad \cdot \left( \|w( \sigma)\|_p + \left(\int_{\sigma}^{\tau} \|(f_1-f_2)(t)\|_{L^{\infty}(\T^n)} \ dt \right)^\frac1p  \right), \\
\end{align*}
since $(a+b)^{\alpha} \le a^{\alpha}+ b^{\alpha}$ for $a, b >0$ and $0<\alpha<1$.
Letting $p\to+\infty$ we conclude
\[
\|(u_1-u_2)^+(\tau)\|_{L^\infty(\T^n)}\leq  \exp \left( \int_{\sigma}^{\tau} \|(f_1-f_2)(t)\|_{L^{\infty}(\T^n)} \ dt \right) \left(  \|(u_1-u_2)^+(\sigma)\|_{L^\infty(\T^n)} + 1 \right),
\]
when $\int_0^{\tau}  \|(f_1-f_2)(t)\|_{L^{\infty}(\T^n)} \ dt >0$ and 
\[
\|(u_1-u_2)^+(\tau)\|_{L^\infty(\T^n)}\leq    \|(u_1-u_2)^+(\sigma)\|_{L^\infty(\T^n)} ,
\]
when $f_1=f_2$.
Since $u_1,u_2$ are continuous, we get 
\[
\|(u_1-u_2)^+(\tau)\|_{L^\infty(\T^n)}\leq  \exp \left( \int_{0}^{\tau} \|(f_1-f_2)(t)\|_{L^{\infty}(\T^n)} \ dt \right)  \left( \|(u_1-u_2)(0)\|_{L^\infty(\T^n)}+1\right) 
\]
for $f_1 \neq f_2$ and 
\[
\|(u_1-u_2)^+(\tau)\|_{L^\infty(\T^n)}\leq   \|(u_1-u_2)(0)\|_{L^\infty(\T^n)} 
 \]
when $f_1=f_2$.
On the other hand, take now $w=(u_1-u_2)^-$ (hence $w(0)=u_1(0)-u_2(0)$), since $(u_1-u_2)^-=(u_1-u_2)^+-(u_1-u_2)$ and $u_1$ and $u_2$ are solutions by linearity we still have
\[
\partial_t w-\eps\Delta w-b(x,t)\cdot Dw\leq \left( \text{sign}^+ (w)-1\right) (f_1-f_2) \quad \text{ in }\T^n\times(0,T)
\]
Therefore, let $0<\sigma<\tau<T$ and repeating the previous argument and noticing that $\text{sign}^+ (w)-1 \le 0 <1$  we obtain
\[
\|(u_1-u_2)^-(\tau)\|_{L^\infty(\T^n)}\leq  \exp \left( \int_{0}^{\tau} \|(f_1-f_2)(t)\|_{L^{\infty}(\T^n)} \ dt \right)  \left( \|(u_1-u_2)(0)\|_{L^\infty(\T^n)}+1\right),
\]
when $f_1 \neq f_2$ and 
\[
\|(u_1-u_2)^-(\tau)\|_{L^\infty(\T^n)}\leq \|(u_1-u_2)(0)\|_{L^\infty(\T^n)}
\]
when $f_1 =f_2$.
Hence, the supremum norms of $(u_1-u_2)^+$ and  $(u_1-u_2)^-$ are bounded by $\|g_1-g_2\|_{L^\infty(\T^n)}$  we get
\[
\|(u_1-u_2)(\tau)\|_{L^\infty(\T^n)}\leq  \exp \left( \int_{0}^{\tau} \|(f_1-f_2)(t)\|_{L^{\infty}(\T^n)} \ dt \right)  \left( \|(g_1-g_2 \|_{L^\infty(\T^n)}+1\right) 
\]
when $f_1 \neq f_2$ and 
\[
\|u_1-u_2\|_{L^\infty(\T^n)} \leq  \|g_1-g_2 \|_{L^\infty(\T^n)},
\]
when $f_1=f_2$.
\end{proof}

\begin{rem}
Note that the bounds in Theorem \eqref{th:infinityest1t} do not depend on $\eps$. We can also weaken the regularity requirement on $u_1,u_2$ to the mere continuity in space-time, as in \cite{AronsonVazquez}. Note that the continuity in the time variable is essential in the previous argument, as in the work \cite{ACFS} which allows to deal with $\mathrm{BV}_x$ vector fields whose $\mathrm{BV}_x$-norm has a possible blow-up at $t=0$.
\end{rem}

\section{Applications to uniqueness and continuous dependence estimates of Hamilton-Jacobi equations}\label{sec;HJ}
We now show how to apply some of the previous integrability estimates for Fokker-Planck equations to prove continuous dependence estimates and uniqueness results for Hamilton-Jacobi equations without employing viscosity solutions' theory.\\

We consider the backward viscous Hamilton-Jacobi equation
\begin{equation}
\label{eq:HJ}
\begin{cases}
-\partial_t u-\eps\Delta u+H(x,t,Du)=f(x,t) \quad & in \quad  Q_T:= \Omega \times (0,T) \\
u(x,T)=u_T(x) &in \quad  \Omega,
\end{cases}
\end{equation}
where the problem is either posed on $\T^n$ or on a regular domain $\Omega\subset\R^n$ and \eqref{eq:HJ} is equipped with the homogeneous Neumann boundary condition
\[
\partial_\nu u=0\text{ on }\partial\Omega\times(0,T).
\]
Note that $\Omega$ can be unbounded and even the whole space $\R^n$. 
We prove that if $u_1,u_2$ are two energy solutions of \eqref{eq:HJ} with different initial conditions $g_1,g_2$ and source terms $f_1,f_2$, we have
\[
\|(u_1-u_2)(t)\|_{L^\infty(\Omega)}\leq \|g_1-g_2\|_{L^\infty(\Omega)}+\int_0^T\|f_1(t)-f_2(t)\|_{L^\infty(\Omega)}\,dt.
\]
This type of estimates is often obtained applying the comparison principle to
\[
u_1\text{ and }u_2+\int_0^t\|(f_1-f_2)(s)\|_{\infty}\,ds+\|g_1-g_2\|_{\infty}.
\]
Our main concern here is to propose a new approach that avoids the use of maximum principle methods.
Moreover, we note that no convexity of $H$ is required to run the argument, and the above estimate does not depend on $\eps$. 
Furthermore, our solutions can be weak in the $L^2$ sense. Their existence is addressed for instance in \cite{CirantGoffi2020,PorrettaPriola} under certain a priori bounds on the solutions and weaker conditions on the right-hand side of the equation. Our first main result exploits the conservation of mass property for the dual equation of the linearization of the HJ equation. In the case of unbounded domains $\Omega\subset\R^n$ equipped with the Neumann condition, this holds when $\Omega$ satisfies the interior cone property and the drift $b(x,t)=-D_pH(x,t,Du)$ is bounded (as in Section \ref{Si}) or satisfies (locally) an Aronson-Serrin interpolated condition, combined with suitable conditions at infinity, as in Section \ref{Sii}. This is discussed in \cite[Theorem 2.4 and Remark 2.5]{GoffiTralli}.
\begin{thm}\label{th:hjlip}
 Let $u_1,u_2$ be two solutions of \eqref{eq:HJ} with terminal data $g_1,g_2\in L^\infty(\Omega)$ and $f_1,f_2\in L^1(0,T;L^\infty(\Omega))$, and $D_pH\in L^{\infty}_{\mathrm{loc}}(Q_T)$. Then for all $t\in(0,T)$ we have
 \[
\|(u_1-u_2)(t)\|_{L^\infty(\Omega)}\leq \|g_1-g_2\|_{L^\infty(\Omega)}+\int_0^T\|f_1(t)-f_2(t)\|_{L^\infty(\Omega)}\,dt.
\]
\end{thm}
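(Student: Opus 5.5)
The plan is to argue by duality with the adjoint of the linearized equation, using conservation of mass and positivity of the Fokker--Planck flow in place of the maximum principle for $u_1-u_2$.

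\emph{Linearization.} Set $w=u_1-u_2$. Write
\[
H(x,t,Du_1)-H(x,t,Du_2)=-b(x,t)\cdot Dw,\qquad b(x,t):=-\int_0^1 D_pH\bigl(x,t,sDu_1+(1-s)Du_2\bigr)\,ds .
\]
Since $D_pH\in L^\infty_{\mathrm{loc}}$ and the gradients of energy solutions are locally bounded (or bounded, as in the setting referred to before the statement), $b$ is bounded. Then $w$ solves the linear backward problem
\[
-\partial_t w-\eps\Delta w-b\cdot Dw=f_1-f_2,\qquad w(T)=g_1-g_2,
\]
with the homogeneous Neumann condition when the problem is posed on $\Omega$.

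\emph{Dual problem.} Fix $\tau\in(0,T)$ and a nonnegative $\rho_\tau\in L^1(\Omega)$ with $\|\rho_\tau\|_{L^1}=1$, say smooth and compactly supported. Let $\rho$ solve the forward Fokker--Planck equation
\[
\partial_t\rho-\eps\Delta\rho+\mathrm{div}(b\rho)=0\quad\text{on }(\tau,T),\qquad \rho(\tau)=\rho_\tau,
\]
with the conormal boundary condition $\eps\partial_\nu\rho-b\cdot\nu\,\rho=0$. Two properties of $\rho$ are needed.
\begin{itemize}
\item Nonnegativity: $\rho\ge0$. This follows by testing with $\rho^-$ and using Gronwall, since $b$ is bounded.
\item Conservation of mass: $\int_\Omega\rho(t)\,dx=1$ for all $t\in(\tau,T)$. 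On $\T^n$ this is immediate. On unbounded $\Omega$ I would invoke \cite[Theorem 2.4 and Remark 2.5]{GoffiTralli}, as indicated before the statement.
\end{itemize}

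\emph{Duality identity.} Test the $w$-equation with $\rho$ and the $\rho$-equation with $w$, and subtract, exactly as in the proof of Corollary \ref{cor:divLrLp}. The Neumann and conormal conditions make the boundary terms cancel, giving
\[
\frac{d}{dt}\int_\Omega w\rho\,dx=-\int_\Omega (f_1-f_2)\rho\,dx .
\]
Integrating from $\tau$ to $T$ yields
\[
\int_\Omega w(\tau)\rho_\tau\,dx=\int_\Omega (g_1-g_2)\rho(T)\,dx+\int_\tau^T\!\!\int_\Omega (f_1-f_2)\rho\,dx\,dt .
\]
Since $\rho\ge0$ and $\|\rho(t)\|_{L^1}=1$, the right-hand side has absolute value at most
\[
\|g_1-g_2\|_{L^\infty}+\int_\tau^T\|(f_1-f_2)(t)\|_{L^\infty}\,dt .
\]

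\emph{Conclusion.} Take the supremum over admissible $\rho_\tau$, which is dense in the unit ball of $L^1_+$, and use the duality $(L^1)^*=L^\infty$ applied to $\pm w(\tau)$. This gives the claimed $L^\infty$ bound, and it is independent of $\eps$ because the constants come only from mass conservation.

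\emph{Main obstacle.} The hardest step is rigorously justifying the duality identity for weak (energy) solutions. It requires existence of a dual solution in a class where the products $w\rho$ and $Dw\cdot D\rho$ are integrable, and for unbounded $\Omega$ it requires controlling the behaviour at infinity. My first approach would be to mollify $b$ and $\rho_\tau$, use cutoff functions in space, and pass to the limit using the energy bounds together with the boundedness of $b$.
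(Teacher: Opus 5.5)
Your proposal is correct and follows essentially the same route as the paper. Both linearize $w=u_1-u_2$, test against the solution of the forward adjoint Fokker--Planck problem with the conormal boundary condition (whose existence and conservation of mass come from Goffi--Tralli), and conclude from the duality identity and $\int_\Omega\rho(t)\,dx=1$; you also make explicit two points the paper leaves implicit, namely the nonnegativity of $\rho$ (needed so that $\|\rho(t)\|_{L^1}=1$) and the boundedness of the drift $b$.
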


\begin{proof}
Let $w=u_1-u_2$ and note that it satisfies the evolution PDE
\[
-\partial_t w-\eps\Delta w+b(x,t)\cdot Dw=f_1(x,t)-f_2(x,t)\text{ in }\Omega\times(0,T)
\]
equipped with $w(x,T)=g_1(x)-g_2(x)$ and homogeneous Neumann conditions (we can neglect the boundary values for manifolds without boundary like $\T^n$ or the whole $\R^n$), where
\[
b(x,t)=\int_0^1D_pH(x,t,\theta Du_1+(1-\theta)Du_2)\,d\theta.
\]
Using the solution $\rho\in \mathcal{H}_2^1(Q_T)$ of the (forward) adjoint problem
\begin{equation*}
\begin{cases}
\partial_t \rho-\eps\Delta \rho-\mathrm{div}(b(x,t)\rho)=0 \quad & in \quad  Q_\tau:= \Omega \times (\tau,T), \\
\partial_\nu \rho+\rho b\cdot \nu=0 &in \quad  \partial\Omega\times(0,T),\\
\rho(x,\tau)=\rho_\tau(x) &in \quad  \Omega,
\end{cases}
\end{equation*}
which exists thanks to \cite[Theorem 2.4]{GoffiTralli}, as a test function in the weak formulation of the PDE solved by $w$ we get, by duality,
\[
\int_\Omega w(\tau)\rho(\tau)\,dx=\int_\Omega w(T)\rho(T)\,dx+\int_\tau^T\int_\Omega (f_1(t)-f_2(t))\rho\,dxdt.
\]
Using the conservation of mass $\int_\Omega \rho(s)\,dx= 1$ we get the statement by the H\"older inequality.
\end{proof}

\begin{rem}
Notice that all the results of this section hold replacing $\eps \Delta u $ with non-homogeneous diffusions in non-divergence form driven by $\text{Tr}(A(x,t) D^2 u(x,t))$ or, more in general, $F(x,t,D^2u)$ where $F$ is uniformly elliptic. Indeed $w=u_1-u_2$ solves the
\[
-\partial_t w- a_{ij}(x,t) D_{i,j}^2w+b(x,t)\cdot Dw=f_1(x,t)-f_2(x,t)\text{ in }\Omega\times(0,T),
\]
where 
\[
a_{ij}(x,t)= \int_0^1 F_{i,j} (x,t, \theta D^2u_1+(1-\theta)D^2u_2) \, d\theta.
\]
Note that $(a_{ij})$ is well-defined and uniformly elliptic since the uniform ellipticity of $F$ implies the Lipschitz continuity in the matrix entry. In this case the adjoint equation becomes
\[
\partial_t \rho- D_{i,j}^2 (a_{i,j}(x,t)\rho)-\mathrm{div}(b(x,t)\rho)=0.
\]
\end{rem}

\begin{rem}
The above result hides some further regularity assumptions on $H$ and $u$. If $\Omega$ is a bounded domain the drift $b\sim D_pH$ is globally bounded and we do not need any further assumption. If $\Omega$ is unbounded we need Lipschitz estimates to have the drift vector field $b$ globally bounded on the whole domain. The proof of Lipschitz estimates typically requires coercivity type conditions of $H$ in the gradient entry, but not necessarily the convexity. Clearly, if either $H$ is strictly convex (e.g. with power growth in the gradient) or uniformly convex, these Lipschitz bounds are well-known. Lipschitz bounds do not necessarily depend on $\eps$, see \cite{LioAA, Lioduke}.
\end{rem}

\begin{rem}
Notice that in the special case $H(x,t,p)=h(|p|)$, where $h:\R^+\to\R$ is a $C^1$ function, we have
\[
b(x,t)\cdot \nu=\left(\int_0^1D_pH(x,t,\theta Du_1+(1-\theta)Du_2)\,d\theta\right)\cdot \nu=0,
\]
and the boundary condition $\partial_\nu\rho=0$ is sufficient.
\end{rem}

\begin{rem}[Homogeneous Dirichlet boundary conditions]
The above proof can be repeated when the problem is equipped with homogeneous Dirichlet boundary conditions
\[
u(x,t)=0\text{ on }\partial\Omega\times(0,T).
\]
If the domain is bounded we have $\int_\Omega \rho(s)\,ds\leq1$ via the divergence theorem (it is enough to test the PDE solved by $\rho$ against $\varphi=1$). In the case of domains with non-compact boundary we expect a similar property, but we are not aware of any result in this direction.
\end{rem}

In the next sub-Sections we address three cases where estimates are \textit{independent of the viscosity $\eps$} through the results of the previous sections:
\begin{itemize}
\item[(i)] The first case is that of Lipschitz continuous solutions satisfying one of the following semiconcavity-type conditions
\begin{equation}\label{semic1t}
D^2u \leq\left(\frac{c_1}{t}+c_2\right)\mathbb{I}_n.
\end{equation}
This leads to a Douglis-type estimate for viscous HJ equations.
\item[(ii)] The second case is the ``weak'' Aronson-Serrin interpolated condition in mixed Lebesgue spaces 
	\[
	\mathrm{div}(D_pH)\in L^r(0,T;L^q(\T^n)) \qquad \text{such as} \quad  \frac{n}{2q}+\frac{1}{r}\leq 1.
	\]
\item[(iii)] The third is that of weak energy solutions in $\mathcal{H}_2^1\cap C(\overline{Q}_T)$ satisfying the integrability condition
\begin{equation}
\label{eq:PQ12}
D_pH\in L^R(0,T;L^Q(\T^n)),\quad\frac{n}{2Q}+\frac{1}{R}\leq\frac12,
\end{equation}
where $R\ge2$ and $Q>n$.
\end{itemize}

\subsection{Semiconcavity-type condition (i)} \label{Si} We start by recalling some well-known uniqueness properties of the first-order Hamilton-Jacobi equation \cite{Benton}.  The 2D Hamilton-Jacobi equation
\[
-\partial_t u+u_x^2=0\text{ in }\R\times\R
\]
admits more than one global solution. For instance, $u_1(x,t)=0$ is a solution (with Cauchy datum $u_1(x,0)=0$), $u_2(x,t)=|x|+t$ is another solution, and 
\[
u_3(x,t)=
\begin{cases}
|x|+t&\text{ if }|x|\leq t,\\
0&\text{ if }|x|\geq t
\end{cases}
\]
is again a solution (starting with the initial datum $u_3(x,0)=0$). Note that $u_1$ is semiconcave, while $u_3$ is not semiconcave in the space variable. Well-known results in the theory of first-order Hamilton-Jacobi PDEs establish that the semiconcavity assumption is sufficient to single out the correct physical solution and prove a selection principle for uniformly convex Hamilton-Jacobi PDE. \\

Motivated by this example, we provide a similar criterion for viscous Hamilton-Jacobi equations, showing that for some special convex Hamiltonians the requirement guaranteeing the uniqueness can be considerably weakened using PDE methods, see Theorem \ref{unique-ssh}. Let us consider 
\begin{equation}
\label{eq:HJ2}
\begin{cases}
-\partial_t u-\eps\Delta u+H(x,t,Du)=f(x,t) \quad & in \quad  Q_T:= \T^n \times (0,T) \\
u(x,T)=g(x) &in \quad  \T^n.
\end{cases}
\end{equation}

\begin{thm}
Let $u_1,u_2$ be two solutions of \eqref{eq:HJ2} as in (i) (see \eqref{semic1t}) such that the terminal data $g_1,g_2\in L^\infty(\T^n)$ and source terms  $f_1, f_2\in L^1(0,T;L^\infty(\T^n))$. Assume there exists $\lambda, \Lambda \in \R$ such that 
\begin{equation}
\label{eq:Helliptic}
\lambda |\xi|^2 \le D_{p_ip_j}^2 H(x,t, p) \xi_{i} \xi_{j} \le \Lambda |\xi|^2
\end{equation} 
for each $\xi \in \R^n$ and $D_p H \in W^{1, \infty}_{\mathrm{loc}}(Q_T)$.
If $\|f_1- f_2\|_{L^1(0,T; L^{\infty}(\T^n))}>0$, then for all $t\in(0,T)$ we have
\begin{equation}
\label{thm2:f}
\|(u_1-u_2)(t)\|_{L^\infty(\T^n)}\leq  \exp \left( \int_{0}^{\tau} \|(f_1-f_2)(t)\|_{L^{\infty}(\T^n)} \ dt \right)  \left( \|g_1-g_2\|_{L^\infty(\T^n)}+1\right) 
\end{equation}
Moreover when $f_1=f_2$  we get
\begin{equation}
\label{thm2:0}
\|(u_1-u_2)(t)\|_{L^\infty(\T^n)}\leq \|g_1-g_2\|_{L^\infty(\T^n)}.
\end{equation}
\end{thm}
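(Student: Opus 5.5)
The plan is to reduce the statement to Theorem \ref{th:infinityest1t} after reversing time. Set $w=u_1-u_2$. As in the proof of Theorem \ref{th:hjlip}, $w$ solves the linear backward equation
\[
-\partial_t w-\eps\Delta w+b(x,t)\cdot Dw=f_1-f_2,\qquad b(x,t)=\int_0^1D_pH(x,t,\theta Du_1+(1-\theta)Du_2)\,d\theta,
\]
with $w(T)=g_1-g_2$. Since $u_1,u_2$ are Lipschitz, $b$ is bounded. Because $D_pH\in W^{1,\infty}_{\mathrm{loc}}$ and $u_i$ are semiconcave by \eqref{semic1t}, so that $D^2u_i$ are measures bounded from above, I expect $\mathrm{div}(b)$ to make sense at least as a measure whose negative part is a function. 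Then I change variables $s=T-t$ and set $\tilde w(x,s)=w(x,T-s)$. This gives a forward viscous transport equation $\partial_s\tilde w-\eps\Delta\tilde w-\tilde b\cdot D\tilde w=\tilde f$ with $\tilde b=-b(\cdot,T-s)$ and initial datum $g_1-g_2$. This is exactly the setting of Theorem \ref{th:infinityest1t}, and \eqref{thm2:f} and \eqref{thm2:0} follow once the hypothesis $[\mathrm{div}\,\tilde b]^-\le c_1'/s+c_2'$ is checked.

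The key computation is that divergence bound. Formally,
\[
\mathrm{div}\,b=\int_0^1\Big[\sum_i D_{x_ip_i}H+D^2_{pp}H(\cdot,\cdot,p_\theta):(\theta D^2u_1+(1-\theta)D^2u_2)\Big]d\theta,
\]
with $p_\theta=\theta Du_1+(1-\theta)Du_2$. The first term is bounded because $D_pH\in W^{1,\infty}_{\mathrm{loc}}$ and $p_\theta$ ranges over a compact set. For the second term, write $A=D^2_{pp}H$, which satisfies $\lambda\mathbb{I}_n\le A\le\Lambda\mathbb{I}_n$, and $M=\theta D^2u_1+(1-\theta)D^2u_2\le(c_1/t+c_2)\mathbb{I}_n$. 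Since $A\ge0$, the trace inequality gives $A:M\le \mathrm{tr}(A)(c_1/t+c_2)\le n\Lambda(c_1/t+c_2)$, at least when $c_1/t+c_2\ge0$. This yields an \emph{upper} bound $\mathrm{div}\,b\le C_1/t+C_2$, equivalently $[\mathrm{div}(-b)]^-\le C_1/t+C_2$. Here one must track signs and the time orientation carefully. Under the reversal, the singular time is $t=0$, which becomes $s=T$, not $s=0$. Two options exist: read \eqref{semic1t} as blowing up at the terminal time, as in the Douglis setting where semiconcavity is inherited from the terminal data; or note that the proof of Theorem \ref{th:infinityest1t} only needs the Gronwall factor $(\tau/\sigma)^{c_1}$ to be controlled when $\sigma\to$ the datum time. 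I would redo that Gronwall step with the singularity on the appropriate side.

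The main obstacle is this orientation and sign bookkeeping. One needs $[\mathrm{div}(\text{drift of the forward equation})]^-\lesssim 1/(\text{distance to the data time})$, and I must check that the semiconcavity direction (an upper bound on $D^2u$) combined with $A\ge0$ gives exactly that one-sided bound for the correct sign of the drift. A secondary technical point is rigor. $D^2u_i$ may only be a measure, so I would justify the integration by parts $\int b\cdot Dw\,w^{p-1}=-\frac1p\int\mathrm{div}(b)w^p$ by mollifying $u_i$. Semiconcavity is stable under convolution, so the bound on $\mathrm{div}\,b^\delta$ holds uniformly, and I would pass to the limit in the $L^p$ energy inequality before sending $p\to\infty$. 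The continuity of $u_i$ up to the data time, used at the end of Theorem \ref{th:infinityest1t}, is guaranteed by the Lipschitz assumption.
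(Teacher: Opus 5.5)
Your proposal is correct and follows the paper's own proof. Both form the linearized equation for $w=u_1-u_2$, reverse time, bound $\mathrm{div}\int_0^1 D_pH(\cdot,\cdot,Du_\theta)\,d\theta$ from above by $C_2+C_1/t$ via the chain rule and $\mathrm{tr}(AM)\le \mathrm{tr}(A)\,(c_1/t+c_2)$ for $A\ge0$, and then invoke Theorem \ref{th:infinityest1t}. The orientation issue you flag is real, since the paper passes over it silently, but it is harmless: for fixed $t\in(0,T)$ the bound $C_1/s+C_2$ is at most the constant $C_1/t+C_2$ on $[t,T]$, so Theorem \ref{th:infinityest1t} applies there with $c_1=0$, and after $p\to\infty$ the resulting estimate does not depend on $c_1,c_2$ at all.
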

\begin{proof}
Let $w=u_1-u_2$ and note that it satisfies the evolution PDE
\[
-\partial_t w-\eps\Delta w-b(x,t)\cdot Dw=f_1-f_2\text{ in }\T^n \times(0,T)
\]
equipped with $w(x,T)=g_1(x)-g_2(x)$, where
\[
b(x,t)=-\int_0^1D_pH(x,t,\theta Du_1+(1-\theta)Du_2)\,d\theta.
\]
Then, $z(x,t)=w(x,T-t)$ solves the forward equation
\[
\partial_t z-\eps\Delta z-b(x,T-t)\cdot Dz=f_1(x,T-t)-f_2(x,T-t)\text{ in }\T^n \times(0,T)
\]
equipped with $z(x,0)=g_1(x)-g_2(x)$. Setting
\[
Du_\theta:=\theta Du_1+(1-\theta)Du_2.
\]
we  notice that 
\begin{equation}
\label{eq:divb}
\begin{aligned}
-\mathrm{div}(b(x,t))&= \int_0^1 \sum_{i=1}^n D_{x_i} D_{p_i} H(x,t, Du_\theta)\,d\theta+ \\
&\quad + \int_0^1 \sum_{i=1}^n \sum_{j=1}^n \theta D_{p_j, p_i}^2 H (x,t, Du_\theta) \, D_{i,j}^2 u_1 \ d\theta \\
&\quad  + \int_0^1 \sum_{i=1}^n \sum_{j=1}^n (1-\theta) D_{p_j, p_i}^2 H (x,t, Du_\theta) \, D_{i,j}^2 u_2  \ d \theta\\
&\le C_2 +  \int_0^1 \theta \text{Tr}(D^2 H D^2 u_1) \, d\theta +  \int_0^1 (1-\theta) \text{Tr}(D^2 H D^2 u_2) \, d \theta  \\
&\le C_2 + \frac{C_1}{t},
\end{aligned}
\end{equation}
where $C_1$ and $C_2$ depend on $\Lambda, c_1, c_2$ defined in the assumptions \eqref{semic1t} and \eqref{eq:Helliptic}.
Then, since  the trivial solution $v\equiv0$ satisfies $
\partial_t u-\eps\Delta u-b(x,t)\cdot Du=0 \quad \text{ in }\T^n \times(0,T)$  by Theorem \ref{th:infinityest1t} we have get \eqref{thm2:f} if $f_1 \neq f_2$ and \eqref{thm2:0} if $f_1 =f_2$.
\end{proof}
\begin{rem}
Some remarks on this uniqueness result are in order. In the theory of ODEs \cite{Constantin10} the growth condition of the velocity field must be of order exactly $c/t$, $c\leq1$ as $t\to0^+$ (in the case it behaves as $c/t$, $c>1$, there are counterexample to uniqueness of solutions, cf. \cite{Perron}). The conditions for the validity of uniqueness can be relaxed if one assumes further regularity on the velocity field: this is known as Krasnoel'skii-Kreine's uniqueness, cf. \cite{ODEbook}. Here the fact that $b$ stems from the Hamiltonian of a nonlinear HJ equation, i.e. $b\sim D_pH(Du)$, with $H$ having some convexity or coercivity properties, provides a certain degree of regularity which ensures a uniqueness condition for general $c>0$. This type of uniqueness result in the theory of first-order HJ equations goes back to A. Douglis \cite{Douglis}. \\

\end{rem}

The next result provides new uniqueness and continuous dependence properties of superquadratic diffusive Hamilton-Jacobi equations under the weaker semi-superharmonic condition $\Delta u\leq c$.
\begin{thm}\label{unique-ssh}
Let $u_1,u_2$ be two Lipschitz solutions of \eqref{eq:HJ2} satisfying $\Delta u_i\leq C$. Assume that the terminal data $g_1,g_2\in L^\infty(\T^n)$ and source terms  $f_1, f_2\in L^1(0,T;L^\infty(\T^n))$. Assume that 
\begin{equation}
\label{eq:Hpower}
H(p)=(1+|p|^2)^{\frac{\gamma}{2}},\ \gamma>1.
\end{equation} 
If $\|f_1- f_2\|_{L^1(0,T; L^{\infty}(\T^n))}>0$, then for all $t\in(0,T)$ we have
\begin{equation}
\label{thm2:f}
\|(u_1-u_2)(t)\|_{L^\infty(\T^n)}\leq  \exp \left( \int_{0}^{\tau} \|(f_1-f_2)(t)\|_{L^{\infty}(\T^n)} \ dt \right)  \left( \|g_1-g_2\|_{L^\infty(\T^n)}+1\right) 
\end{equation}
Moreover when $f_1=f_2$  we get
\begin{equation}
\label{thm2:0}
\|(u_1-u_2)(t)\|_{L^\infty(\T^n)}\leq \|g_1-g_2\|_{L^\infty(\T^n)}.
\end{equation}
\end{thm}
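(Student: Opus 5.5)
The plan is to reduce the statement to Theorem \ref{th:infinityest1t}, exactly as in the proof of the previous theorem. The only new ingredient is an upper bound on $-\mathrm{div}(b)$ that uses the semi-superharmonicity $\Delta u_i\le C$ in place of the full semiconcavity \eqref{semic1t}.

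\emph{Step 1 (linearization).} Set $w=u_1-u_2$. Then $w$ solves
\[
-\partial_t w-\eps\Delta w-b\cdot Dw=f_1-f_2,\qquad b(x,t)=-\int_0^1 D_pH(Du_\theta)\,d\theta,\quad Du_\theta=\theta Du_1+(1-\theta)Du_2,
\]
with $w(T)=g_1-g_2$. Reversing time, $z(x,t)=w(x,T-t)$ is a forward viscous transport equation of the type treated in Theorem \ref{th:infinityest1t}. Since $u_1,u_2$ are Lipschitz with constant $L$, $|Du_\theta|\le L$. On the ball $\{|p|\le L\}$ we have
\[
D^2H(p)=\gamma(1+|p|^2)^{\frac{\gamma}{2}-1}\Big(\mathbb{I}_n+(\gamma-2)\frac{p\otimes p}{1+|p|^2}\Big),
\]
so $D^2H(p)$ is uniformly elliptic with constants $\lambda(L,\gamma),\Lambda(L,\gamma)>0$. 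This is the only place where $\gamma>1$ is used; for $1<\gamma<2$ it holds with $\lambda=\gamma(1+L^2)^{\gamma/2-2}$.

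\emph{Step 2 (divergence bound).} Since $H$ does not depend on $x$, \eqref{eq:divb} becomes
\[
-\mathrm{div}(b)=\int_0^1\mathrm{Tr}\big(D^2H(Du_\theta)\,D^2u_\theta\big)\,d\theta .
\]
I split $D^2H(p)=a(p)\mathbb{I}_n+c(p)\,\hat p\otimes\hat p$, with $a=\gamma(1+|p|^2)^{\gamma/2-1}$, $c=\gamma(\gamma-2)(1+|p|^2)^{\gamma/2-2}|p|^2$ and $\hat p=p/|p|$. The isotropic part is fine: $a\,\Delta u_\theta\le a\,C\le \Lambda C$ because $\Delta u_\theta=\theta\Delta u_1+(1-\theta)\Delta u_2\le C$. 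The goal is $[\mathrm{div}(b)]^-\le c_2$, which is the case $c_1=0$ of Theorem \ref{th:infinityest1t}.

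\emph{Step 3 (main obstacle).} The anisotropic term $c(Du_\theta)\,D^2u_\theta\,\hat p\cdot\hat p$ is not controlled by $\Delta u_\theta$ alone. I expect this to be the hard point. My first attempt is to avoid a pointwise bound and instead work at the level of the energy identity in the proof of Theorem \ref{th:infinityest1t}. There $\mathrm{div}(b)$ only enters through $\int b\cdot D(w^p)$. Using $\mathrm{div}_x[D_pH(Du_\theta)]=\mathrm{Tr}(D^2H(Du_\theta)D^2u_\theta)$ and the radial structure of $H$, the term $c\,D^2u_\theta\hat p\cdot\hat p=c\,\hat p\cdot D|Du_\theta|$ is a derivative along the drift direction itself. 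I would integrate it by parts against $w^p$ and absorb the result into the diffusion or the Gronwall term, with constants depending on $L,\gamma,C$ only.

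If that fails, the fallback is to strengthen the hypothesis used in the reduction to a one-sided bound on $D^2u_i\,\hat p\cdot\hat p$, for example through the Bernstein-type identity obtained by differentiating \eqref{eq:HJ2} along $Du_i$. Once $[\mathrm{div}(b)]^-\le c_2$ is secured, I apply Theorem \ref{th:infinityest1t} with $c_1=0$ to $z$ and to the trivial solution. Reversing time back to $w$ then gives \eqref{thm2:f} when $f_1\neq f_2$ and \eqref{thm2:0} when $f_1=f_2$, exactly as in the previous theorem.
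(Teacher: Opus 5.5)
Your Steps 1 and 2 are correct, but Step 3, the only nontrivial step, is left open, and neither remedy closes it. Set $\hat p_\theta=Du_\theta/|Du_\theta|$ and $\Phi'(s)=\gamma(\gamma-2)(1+s^2)^{\frac{\gamma}{2}-2}s^2$. Integrating $\int c(Du_\theta)\,\hat p_\theta\cdot D|Du_\theta|\,w^p$ by parts produces $-\int\Phi(|Du_\theta|)\,\mathrm{div}(\hat p_\theta)\,w^p-\int\Phi(|Du_\theta|)\,\hat p_\theta\cdot D(w^p)$. The first term is circular: $\mathrm{div}(\hat p_\theta)=|Du_\theta|^{-1}\bigl(\Delta u_\theta-D^2u_\theta\hat p_\theta\cdot\hat p_\theta\bigr)$ contains exactly the quantity you wanted to avoid, and it is singular where $Du_\theta=0$. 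The second term contains $Dw$, so it can only be absorbed by the diffusion, which requires $\eps>0$ and costs $\eps$-dependent constants. The fallback changes the hypothesis. The relevant direction is $\hat p_\theta$, not $Du_i/|Du_i|$, so you would in effect be assuming a one-sided bound on $D^2u_i$ in every direction, i.e.\ the hypothesis of the previous theorem rather than $\Delta u_i\le C$. In fact no pointwise bound on $[\mathrm{div}(b)]^-$ can follow from $\Delta u_i\le C$ when $\gamma\neq2$. Take $\xi\neq0$, a unit vector $e\perp\xi$, and $X=M(\hat\xi\otimes\hat\xi-e\otimes e)$. Then $\mathrm{Tr}\,X=0$, yet $\mathrm{Tr}(D^2H(\xi)X)=c(\xi)M$, which is unbounded above (let $M\to\pm\infty$ according to the sign of $c(\xi)\neq0$).

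You cannot fill the gap by following the paper either, because its proof breaks at exactly this point; your diagnosis is the correct one. Its displayed formula for $-\mathrm{div}(b)$ contains $\gamma(\gamma-2)(1+|Du_\theta|^2)^{\frac{\gamma}{2}-2}|Du_\theta|^2$. The chain rule instead gives $\gamma(\gamma-2)(1+|Du_\theta|^2)^{\frac{\gamma}{2}-2}\langle D^2u_\theta Du_\theta,Du_\theta\rangle$, which is your anisotropic term. The paper's claims that this term is controlled by the Lipschitz bound for $\gamma\ge2$ and is nonpositive for $\gamma\le2$ both rest on the lost Hessian. So the reduction to Theorem \ref{th:infinityest1t} is valid only for $\gamma=2$, where $D^2H=2\mathbb{I}_n$ and $-\mathrm{div}(b)=2\Delta u_\theta\le 2C$.

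For $\eps>0$, your idea of using the diffusion does give a complete proof, provided you apply it to the drift term itself rather than to $\mathrm{div}(b)$. Use $\|b\|_\infty\le\sup_{|\xi|\le L}|DH(\xi)|$ and test the equation for $z$ with $(z^+)^{p-1}$. Then $\int (z^+)^{p-1}b\cdot Dz^+\le\frac{\eps(p-1)}{2}\int (z^+)^{p-2}|Dz^+|^2+\frac{\|b\|_\infty^2}{2\eps(p-1)}\int (z^+)^p$. The extra factor in the bound for $\|z^+(\tau)\|_{L^p}$ is therefore $\exp\bigl(\|b\|_\infty^2\tau/(2\eps(p-1))\bigr)$, which tends to $1$ as $p\to\infty$. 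Treat the source terms and $z^-$ as in the proof of Theorem \ref{th:infinityest1t}. Both estimates then follow with $\eps$-independent constants, without using $\Delta u_i\le C$ at all. Alternatively, since the drift is bounded, Theorem \ref{th:hjlip} already gives an estimate of this type. For $\eps=0$, the case in which a divergence condition is really needed, neither your argument nor the paper's proves the statement for $\gamma\neq2$.
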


\begin{proof}
Note that
\[
D_pH(p)=\gamma(1+|p|^2)^{\frac{\gamma}{2}-1}p.
\]
Set 
\[
Du_\theta:=\theta Du_1+(1-\theta)Du_2.
\]
Back to \eqref{eq:divb} we have
\[
-\mathrm{div}(b(x,t))=\int_0^1[\gamma(\gamma-2)(1+|Du_\theta|^2)^{\frac{\gamma}{2}-2}|Du_\theta|^2+\gamma(1+|Du_\theta|^2)^{\frac{\gamma}{2}-1}(\theta \Delta u_1+(1-\theta)\Delta u_2)]\,d\theta.
\]
When $\gamma\geq2$, since $u_i$ are Lipschitz and $\Delta u_i\leq C$, we have $-\mathrm{div}(b(x,t))\leq K$. If $1<\gamma\leq2$, the first term is nonpositive, while the second one can be rewritten as
\[
\gamma\underbrace{\frac{1}{(1+|Du_\theta|^2)^{\frac{2-\gamma}{2}}}}_{\leq 1\text{ since }\gamma\in(1,2)}(\theta \Delta u_1+(1-\theta)\Delta u_2)\leq K. 
\]
\end{proof}

\begin{rem}
The proof of the continuous dependence estimate in case (i) when $H(x,t,\cdot)\in W^{1,\infty}$  hides some convexity type conditions to deduce $D^2u\leq c(t)$. However, these bounds can be achieved even for Hamiltonians not necessarily uniformly convex, cf. \cite{FlemingJDE}.\\
It is well-known that uniqueness criteria for generalized solutions in $W^{1,\infty}$ of first-order HJ equations introduced by Douglis and Kruzhkov require $H$ to be convex and a one-side bound on second order derivatives \eqref{semic1t}. This latter requirement was weakened by P.-L. Lions \cite{L82Book} for semisuperharmonic solutions of first-order stationary HJ equations, i.e. assuming $\Delta u\leq c$ (a control on the trace) and some degree of regularity of solutions. It is also well-known that Lipschitz semiconcave solutions of convex Hamilton-Jacobi equations are also viscosity solutions of the same equation, and the uniqueness can be deduced by classical viscosity methods in \cite{CrandallIshiiLions}. Our first novelty is that $H$ needs not be convex in the gradient variable and thus our result cannot be directly inherited from Kruzhkov-Douglis results. Thus we provide a non-convex setting where one can deduce quantitative uniqueness of solutions without using the viscosity solutions' theory. Moreover, we provide for the first time a quantitative bound for gradients in $L^p$ spaces: these estimates are known to be difficult to retrieve from the known techniques in the theory of viscosity solutions, cf. \cite{EvansARMA}. This can be of interest for homogenization theory, where continuous dependence estimates play a crucial role.
\end{rem}

In the setting (i), we are also able to prove higher-order continuous dependence estimates for the gradient of solutions of \eqref{eq:HJ}. We are not aware of any other quantitative continuous dependence estimate involving the gradient of solutions of Hamilton-Jacobi equations. 
\begin{cor}  Let $u_1,u_2$ be two solutions of \eqref{eq:HJ} as in Theorem \ref{th:hjlip}. Then we have
\[
\|(Du_1-Du_2)(t)\|_{L^2(\T^n)}^2\leq \|\Delta(u_1-u_2)\|_{L^1(Q_T)}\left(\|g_1-g_2\|_{L^\infty(\T^n)}+\int_0^T\|f_1(t)-f_2(t)\|_{L^\infty(\T^n)}\,dt\right)
\]
\end{cor}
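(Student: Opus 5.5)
The plan is to work at a fixed time $t\in(0,T)$ with $w=u_1-u_2$ on the torus. First integrate by parts in space only, and then combine the result with the sup-norm continuous dependence estimate of Theorem \ref{th:hjlip}.

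\textbf{Step 1 (integration by parts at fixed $t$).} By periodicity there are no boundary terms, so
\[
\|Dw(t)\|_{L^2(\T^n)}^2=\int_{\T^n}|Dw(t)|^2\,dx=-\int_{\T^n}w(t)\,\Delta w(t)\,dx\le \|w(t)\|_{L^\infty(\T^n)}\,\|\Delta w(t)\|_{L^1(\T^n)} .
\]
This step is legitimate for a.e. $t$ as soon as $w(t)\in H^1(\T^n)$ and $\Delta w(t)\in L^1(\T^n)$. For Lipschitz semiconcave solutions, $\Delta u_i(t)$ is a measure whose absolutely continuous part is bounded above. If needed, I would first mollify in $x$, apply the identity to $w^\delta=w*\eta_\delta$, and pass to the limit $\delta\to0$ using $Dw^\delta\to Dw$ in $L^2$ and $\|\Delta w^\delta(t)\|_{L^1}\le\|\Delta w(t)\|_{L^1}$.

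\textbf{Step 2 (sup-norm factor).} Theorem \ref{th:hjlip} applies, since on $\T^n$ with $D_pH\in L^\infty_{\rm loc}$ and Lipschitz $u_i$ the drift is bounded and mass is conserved. It gives
\[
\|w(t)\|_{L^\infty}\le \|g_1-g_2\|_{L^\infty}+\int_0^T\|(f_1-f_2)(s)\|_{L^\infty}\,ds .
\]
This is exactly the second factor in the statement.

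\textbf{Step 3 (the Laplacian factor; the main obstacle).} It remains to bound $\|\Delta w(t)\|_{L^1(\T^n)}$, at a single time, by $\|\Delta(u_1-u_2)\|_{L^1(Q_T)}$. This is the delicate point, because a spatial $L^1$ norm at one time is not in general controlled by the space-time $L^1$ norm. I would handle it in three stages.

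\begin{itemize}
\item[(a)] First, use the equation for $w$ to get time regularity of $t\mapsto \|\Delta w(t)\|_{L^1}$. The function $w$ solves $-\partial_t w-\eps\Delta w+b\cdot Dw=f_1-f_2$ with bounded $b$, so $t\mapsto\|Dw(t)\|_2^2$ is absolutely continuous.
\item[(b)] Next, I would try to show that $\|Dw(t)\|_2^2$ is essentially monotone up to the data terms, by testing the $w$-equation with $-\Delta w$.
\item[(c)] Finally, combine (b) with the Step 1 inequality averaged in time, namely $\int_0^T\|Dw(s)\|_2^2\,ds\le \sup_s\|w(s)\|_\infty\,\|\Delta w\|_{L^1(Q_T)}$, to pass from the time average to the value at $t$.
\end{itemize}

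If this fails, my fallback is to read the norm on the right as $\sup_t\|\Delta w(t)\|_{L^1(\T^n)}$ and to control it in setting (i) through semiconcavity. Since $\int_{\T^n}\Delta u_i(t)\,dx=0$ by periodicity and $\Delta u_i\le n(c_1/t+c_2)$ by \eqref{semic1t}, we have
\[
\|\Delta u_i(t)\|_{L^1(\T^n)}=2\int_{\T^n}(\Delta u_i)^+(t)\,dx\le 2n\Big(\frac{c_1}{t}+c_2\Big).
\]
Hence $\|\Delta w(t)\|_{L^1}$ is finite for every $t>0$, and the estimate closes with an explicit constant.
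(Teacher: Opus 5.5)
Your Steps 1 and 2 are exactly the paper's proof. Integration by parts at a fixed time gives $\|Dw(t)\|_{L^2(\T^n)}^2\le\|w(t)\|_{L^\infty(\T^n)}\|\Delta w(t)\|_{L^1(\T^n)}$, and Theorem \ref{th:hjlip} bounds the first factor. You are right that Step 3 is the crux, but it cannot be closed: with the space--time norm $\|\Delta(u_1-u_2)\|_{L^1(Q_T)}$ on the right, the inequality is false.

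Take $\T^n$, $\eps=1$, $T=1$, $H\equiv0$, $f_1=f_2=0$, $g_1=\sin(2\pi x_1)$, $g_2=0$. Then $u_2\equiv0$ and $u_1=e^{-4\pi^2(1-t)}\sin(2\pi x_1)$; this is smooth, so it also fits setting (i). Hence $\|Du_1(t)\|_{L^2}^2=2\pi^2e^{-8\pi^2(1-t)}\to2\pi^2$ as $t\to1$, while $\|\Delta u_1\|_{L^1(Q_1)}\|g_1\|_{L^\infty}=\frac{2}{\pi}(1-e^{-4\pi^2})<1$.

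This is also why (a)--(c) fail. Testing with $-\Delta w$ gives $\frac{d}{ds}\|Dw(s)\|_{L^2}^2=2\eps\|\Delta w(s)\|_{L^2}^2-2\int_{\T^n}(b\cdot Dw)\Delta w\,dx+2\int_{\T^n}(f_1-f_2)\Delta w\,dx$. So for this backward equation $\|Dw(s)\|_{L^2}^2$ is essentially nondecreasing in $s$. The only time average you can compare $\|Dw(t)\|_{L^2}^2$ with is then the one over $[t,T]$. That yields at best $\|Dw(t)\|_{L^2}^2\lesssim(T-t)^{-1}\sup_s\|w(s)\|_{L^\infty}\|\Delta w\|_{L^1(Q_T)}$ plus a data term, with constants depending on $\eps$ and $\|b\|_{L^\infty}$. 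Replacing $\sin(2\pi x_1)$ by $\sin(2\pi kx_1)$ and optimizing in $k$ shows that the factor $(T-t)^{-1}$ is sharp.

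Your fallback therefore proves a different statement, and that is also all the paper's own argument delivers. The paper stops at $\|w(t)\|_{L^\infty}\|\Delta w(t)\|_{L^1(\T^n)}$ and then writes an unspecified constant $K$. In doing so it tacitly replaces $\|\Delta(u_1-u_2)\|_{L^1(Q_T)}$ by the time-slice norm (or its supremum in $t$). There are two correct versions. One keeps that time-slice norm on the right. The other keeps the space--time norm but integrates the left side in time, $\|D(u_1-u_2)\|_{L^2(Q_T)}^2\le\|\Delta(u_1-u_2)\|_{L^1(Q_T)}\bigl(\|g_1-g_2\|_{L^\infty}+\int_0^T\|(f_1-f_2)(s)\|_{L^\infty}\,ds\bigr)$; this is your inequality in (c) combined with Step 2.

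Two further points about your semiconcavity fallback. First, it relies on \eqref{semic1t}, which is not a hypothesis of Theorem \ref{th:hjlip}. Second, its bound $\|\Delta w(t)\|_{L^1}\le4n(c_1/t+c_2)$ gives no control of $\sup_t\|\Delta w(t)\|_{L^1}$ when $c_1>0$. What it actually yields is $\|Dw(t)\|_{L^2}^2\le4n(c_1/t+c_2)\bigl(\|g_1-g_2\|_{L^\infty}+\int_0^T\|(f_1-f_2)(s)\|_{L^\infty}\,ds\bigr)$ for $t>0$, not the stated inequality.
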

\begin{proof}
\begin{multline*}
\|(Du_1-Du_2)(t)\|_{L^2(\T^n)}^2=\int_{\T^n}|Du_1(t)-Du_2(t)|^2\,dx=-\int_{\T^n}(u_1-u_2)(t)\Delta(u_1-u_2)(t)\,dx\\
\leq \|(u_1-u_2)(t)\|_{L^\infty(\T^n)}\|\Delta(u_1-u_2)(t)\|_{L^1(\T^n)}.
\end{multline*}
Using the continuous dependence bound in Theorem \ref{th:hjlip} with $\Omega=\T^n$
\[
\|(Du_1-Du_2)(t)\|_{L^2(\T^n)}^2\leq K\left(\|g_1-g_2\|_{L^\infty(\T^n)}+\int_0^T\|f_1(t)-f_2(t)\|_{L^\infty(\T^n)}\,dt\right)
\]

\end{proof}

\begin{rem}
Note that $\Delta u\in L^1$ holds when $u$ is semiconcave and the equation is posed on a bounded domain, see \cite{L82Book}.
\end{rem}

\begin{rem}
We can get further first-order continuous dependence estimates for the term $\|(Du_1-Du_2)(t)\|_{L^p(\T^n)}$ via Gagliardo-Nirenberg inequalities.
\end{rem}

In this setting we can also achieve new continuous dependence estimates in $L^1$, which are typical scales for stability estimates of conservation laws. In fact, we have
\begin{thm}
Let $u_1,u_2$ be two solutions of \eqref{eq:HJ2} as in (i) and satisfying $D^2u(x,t)\leq c(t)\in L^1(0,T)$ such that the terminal data $g_1,g_2\in L^1(\T^n)$ and source terms  $f_1, f_2\in L^1(Q_T)$. Assume there exists $\lambda, \Lambda \in \R$ such that 
\begin{equation}
\label{eq:Helliptic}
\lambda |\xi|^2 \le D_{p_ip_j}^2 H(x,t, p) \xi_{i} \xi_{j} \le \Lambda |\xi|^2
\end{equation} 
for each $\xi \in \R^n$. Then
\[
\|u_1-u_2\|_{L^\infty(0,T;L^1(\T^n))}\leq \exp\left(\int_\tau^T K(t)\,dt\right)\left(\|g_1-g_2\|_{L^1(\T^n)}+\|f_1-f_2\|_{L^1(Q_\tau)}\right)
\]
for some $K\in L^1(0,T)$.
\end{thm}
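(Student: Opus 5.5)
We prove an $L^1$ stability estimate for $w=u_1-u_2$ by testing its linearized equation with a sign-type function. The key input is that the semiconcavity bound $D^2u_i\leq c(t)$ gives a one-sided bound on the divergence of the linearized drift, with a time-integrable constant. As in the previous proofs, $w$ solves the backward linear equation
\[
-\partial_t w-\eps\Delta w-b(x,t)\cdot Dw=f_1-f_2,\qquad w(T)=g_1-g_2,
\]
with $b=-\int_0^1D_pH(x,t,Du_\theta)\,d\theta$.

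\emph{Divergence bound.} Using the computation \eqref{eq:divb} with $D^2u_i\leq c(t)\mathbb{I}_n$ in place of \eqref{semic1t}, together with the upper bound in \eqref{eq:Helliptic}, we get
\[
-\mathrm{div}(b)\leq C_2+n\Lambda\, c(t)=:K(t)\in L^1(0,T).
\]
Here $C_2$ bounds the $x$-derivative term $D_xD_pH$, assumed locally bounded as before. Since $D^2H\geq0$ and $D^2u_i\leq c(t)$, we have $\mathrm{Tr}(D^2H\,D^2u_i)\leq \Lambda n\,c(t)^+$; we use $c^+$ to be safe.

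\emph{Testing.} We test the equation with a smooth convex approximation $\phi_\delta(w)$ of $|w|$, i.e. with $\phi_\delta'(w)\approx\mathrm{sign}(w)$, and integrate over $\T^n$.

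The diffusion term gives $\eps\int\phi_\delta''(w)|Dw|^2\geq0$, which has the favourable sign and is dropped.

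For the transport term, write $b\cdot Dw\,\phi_\delta'(w)=b\cdot D\phi_\delta(w)$ and integrate by parts to get $-\int \mathrm{div}(b)\phi_\delta(w)$.

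Thus, with $h(t)=\int_{\T^n}\phi_\delta(w(t))\,dx$,
\[
-\frac{d}{dt}h(t)\leq \int_{\T^n}(-\mathrm{div}\, b)\,\phi_\delta(w)\,dx+\int_{\T^n}|f_1-f_2|\,dx\leq K(t)h(t)+\|(f_1-f_2)(t)\|_{L^1}.
\]
The orientation matters. The equation is backward, so differentiating $h$ in reversed time reads $\frac{d}{ds}$ with $s=T-t$. The transport term then appears as $-\int \mathrm{div}(b)\phi_\delta$, which is bounded above by $\int(-\mathrm{div} b)^+\phi_\delta\leq K h$. This is consistent with the forward reformulation $z(x,t)=w(x,T-t)$ used in the earlier proof, where $[\mathrm{div}\,b]^-$ is exactly what enters; compare Theorem \ref{th:infinityest1t}.

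\emph{Gronwall.} Applying Gronwall backward from $T$ to $t\geq\tau$ and letting $\delta\to0$ gives
\[
\|w(t)\|_{L^1}\leq \exp\Big(\int_t^T K\Big)\Big(\|g_1-g_2\|_{L^1}+\int_t^T\|f_1-f_2\|_{L^1}\Big),
\]
which yields the claim after taking the supremum in $t$.

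\emph{Main obstacle.} The delicate step is justifying the integration by parts with $\phi_\delta(w)$ and the sign of the divergence term. This requires $b$ to have enough regularity for $\mathrm{div}\, b$ to be a function bounded above, not merely a distribution. That holds because the $u_i$ are Lipschitz and $D^2u_i\leq c(t)$, so $D^2u_i$ is a measure whose singular part is nonpositive, and $D_pH\in W^{1,\infty}_{\mathrm{loc}}$. The singular part contributes with a favourable sign, since $D^2H\geq0$ and the Hessian measure of $u_i$ is nonpositive off the absolutely continuous part. So the one-sided bound $-\mathrm{div}\, b\leq K(t)$ holds in the sense of measures. This suffices when tested against the nonnegative continuous function $\phi_\delta(w)$; if needed one first mollifies $u_i$ in space, using that semiconcavity is preserved by mollification.

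Alternatively, one can argue by duality as in Corollary \ref{cor:divLrLp}: solve the adjoint Fokker–Planck equation with datum $\rho_\tau\in L^\infty$, $\|\rho_\tau\|_\infty\leq1$, and obtain a sup-norm bound $\|\rho(t)\|_\infty\leq \exp(\int K)$ from the same divergence bound. This gives the same $L^1$ estimate.
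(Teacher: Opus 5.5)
Your argument is correct, but your main route is not the paper's. You test the backward equation for $w$ directly with $\phi_\delta'(w)$ and close with a backward Gronwall inequality. This is a Kruzhkov-type $L^1$ argument, essentially the $p=1$ case of the $w^{p-1}$ testing in Theorem~\ref{th:infinityest1t}.

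The paper argues by duality, exactly as in your closing alternative. It solves the forward Fokker--Planck equation \eqref{eq:fp} from time $\tau$ with datum $\rho(\tau)=\mathrm{sgn}(w(\tau))$. The one-sided bound $-\mathrm{div}(b)\le K(t)$ from \eqref{eq:divb} then gives the maximum-principle estimate $\|\rho\|_{L^\infty(Q_\tau)}\le \exp(\int_\tau^T K)$. Finally it reads off $\|w(\tau)\|_{L^1}$ from the identity $\int w(\tau)\rho(\tau)\,dx=\int w(T)\rho(T)\,dx+\iint (f_1-f_2)\rho\,dx\,dt$. Both routes give the same constant $\exp(\int_\tau^T K)$.

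The duality route fits the paper's overall mechanism: the $L^1$ bound for $w$ is dual to the $L^\infty$ bound for $\rho$, and the same identity yields other $L^p$ bounds, as in Corollary~\ref{cor:divLrLp}. However, it presupposes solvability of the adjoint problem with the discontinuous datum $\mathrm{sgn}(w(\tau))$ and a justified duality identity. Moreover, the maximum principle for $\rho$ rests on the same integration by parts against $\mathrm{div}(b)$ (testing with $\rho^{p-1}$ produces $\frac{p-1}{p}\int \mathrm{div}(b)\rho^p$), so the delicate point is not avoided.

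Your route dispenses with the adjoint problem entirely. It only needs enough regularity of $w$ to use $\phi_\delta'(w)$ as a test function, plus the pairing of $\mathrm{div}(b)$ against the nonnegative continuous function $\phi_\delta(w)$. Your treatment of the singular part of $D^2u_i$ handles that pairing, and the paper's formal use of \eqref{eq:divb} passes over it. It has the favourable sign by monotonicity of $D_pH$ and the nonpositivity of the jump and Cantor parts of $D^2u_\theta$.

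Both arguments also implicitly require $D^2_{pp}H\ge 0$ (that is, $\lambda\ge 0$), since $\mathrm{Tr}(AB)\le c\,\mathrm{Tr}(A)$ for $B\le c\,\mathbb{I}_n$ needs $A\ge0$. You state this assumption explicitly.
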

\begin{proof}
Start again with the PDE solved by $w=u_1-u_2$, namely
\[
-\partial_t w-\eps\Delta w-b(x,t)\cdot Dw=f_1-f_2\text{ in }\T^n \times(0,T)
\]
equipped with $w(x,T)=g_1(x)-g_2(x)$, where
\[
b(x,t)=-\int_0^1D_pH(x,t,\theta Du_1+(1-\theta)Du_2)\,d\theta.
\]
By duality with the Fokker-Planck equation \eqref{eq:fp} with initial condition $\rho(x,\tau)=\mathrm{sgn}(w)$, by equation \eqref{eq:divb} and assuming the condition $D^2u(x,t)\leq c(t)\in L^1(0,T)$, we find that
\[
-\mathrm{div}(b(x,t))\leq K(t)\in L^1_t.
\]
By the maximum principle estimates we have
\[
\|\rho\|_{L^\infty(Q_\tau)}\leq \exp\left(\int_\tau^T K(t)\,dt\right)\|\rho(\tau)\|_{L^\infty(\T^n)}.
\]
Thus, by the integral identity
\[
\int_{\T^n}w(\tau)\rho(\tau)\,dx=\int_{\T^n}w(T)\rho(T)\,dx+\iint_{Q_\tau}(f_1-f_2)\rho\,dxdt
\]
and using the choice $\rho(x,\tau)=\mathrm{sgn}(w)$ we conclude
\[
\int_{\T^n}|w(\tau)|\,dx\leq \exp\left(\int_\tau^T K(t)\,dt\right)\left(\|g_1-g_2\|_{L^1(\T^n)}+\|f_1-f_2\|_{L^1(Q_\tau)}\right).
\] 
\end{proof}

\subsection{The ``weak'' Aronson-Serrin interpolated condition (ii)} \label{Sii}
\begin{thm}
\label{th:ii}
Let $u_1,u_2$ be two solutions of \eqref{eq:HJ} with $[\mathrm{div}(D_pH)]^+\in L^r(0,T;L^q(\T^n))$ such that $\frac{n}{2q}+\frac{1}{r}\leq 1$, terminal data $g_1,g_2\in L^p(\T^n)$ and $f_1,f_2\in L^1(0,T;L^p(\T^n))$. Then for all $t\in(0,T)$ 
\[
\|(u_1-u_2)(t)\|_{L^p(\T^n)}\leq C\left(\|g_1-g_2\|_{L^p(\T^n)}+\int_0^T\|f_1(s)-f_2(s)\|_{L^p(\T^n)}\,ds\right).
\]
\end{thm}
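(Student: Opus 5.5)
The plan is to linearize, so that the difference of the two solutions solves a backward drift-diffusion equation of the form \eqref{eq:dd}, and then apply Corollary \ref{cor:divLrLp}. The work lies in checking that the induced velocity field satisfies the divergence hypothesis required there.

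\textbf{Step 1: linearization.} Set $w=u_1-u_2$. Exactly as in the proof of Theorem \ref{th:hjlip},
\[
-\partial_t w-\eps\Delta w-b(x,t)\cdot Dw=f_1-f_2 \quad\text{in } Q_T,\qquad w(T)=g_1-g_2,
\]
where
\[
b(x,t)=-\int_0^1 D_pH(x,t,Du_\theta)\,d\theta,\qquad Du_\theta=\theta Du_1+(1-\theta)Du_2.
\]
We may normalize $\eps=1$ by rescaling time; the constant will then depend on $\eps$, which is consistent with the statement.

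\textbf{Step 2: sign of the divergence.} Here $\mathrm{div}(b)$ means the divergence of the composite field $x\mapsto D_pH(x,t,Du_\theta(x,t))$, so
\[
\mathrm{div}(b)=-\int_0^1 \mathrm{div}\big(D_pH(x,t,Du_\theta)\big)\,d\theta .
\]
Hence
\[
[\mathrm{div}(b)]^-\le \int_0^1 \big[\mathrm{div}(D_pH(\cdot,Du_\theta))\big]^+\,d\theta .
\]
I read the hypothesis $[\mathrm{div}(D_pH)]^+\in L^r_tL^q_x$ as being imposed along the segment $Du_\theta$, uniformly in $\theta$; for $\theta\in\{0,1\}$ it is the hypothesis evaluated at $Du_1$ and at $Du_2$. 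Minkowski's inequality in $L^r_tL^q_x$ then gives
\[
[\mathrm{div}(b)]^-\in L^r(0,T;L^q(\T^n)),\qquad \frac{n}{2q}+\frac1r\le 1 .
\]

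\textbf{Step 3: apply the linear theory.} By the Remark following Theorem \ref{stability}, the $L^2$ estimate, and likewise the $L^p$ bounds of Theorem \ref{main2} (which are obtained by testing with powers of $\rho$), only require the negative part of the divergence of the Fokker--Planck drift to lie in $L^r_tL^q_x$. With this, Corollary \ref{cor:divLrLp} applies to $w$ with $v_T=g_1-g_2$ and $f=f_1-f_2$. It gives, for $t\in(0,T)$,
\[
\|w(t)\|_{L^p}\le C\Big(\|g_1-g_2\|_{L^p}+\int_t^T\|f_1-f_2\|_{L^p}\,ds\Big),
\]
and enlarging $\int_t^T$ to $\int_0^T$ yields the statement. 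The duality argument runs exactly as in the proof of that corollary: test with nonnegative $\rho$ started from $\rho_\tau\in L^{p'}$ with $\|\rho_\tau\|_{p'}\le1$, then take the supremum over $\rho_\tau$.

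\textbf{Main obstacle: the sign bookkeeping.} In Corollary \ref{cor:divLrLp} the drift enters as $-b\cdot Dv$, and the adjoint equation is $\partial_t\rho-\Delta\rho+\mathrm{div}(b\rho)=0$. Testing with $\rho^{2p-1}$ produces the term $-\frac{2p-1}{2p}\int\mathrm{div}(b)\rho^{2p}$, so only $[\mathrm{div}(b)]^-=[\mathrm{div}(\int D_pH)]^+$ must be controlled. This matches the hypothesis on $[\mathrm{div}(D_pH)]^+$, but I would double-check it carefully, because a sign slip would swap $^+$ for $^-$.

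\textbf{Secondary caveats.}
\begin{itemize}
\item The range $p\in(1,\infty)$ is needed in Theorem \ref{main2}.
\item The duality requires the adjoint solution $\rho$ to exist in $\mathcal{H}^1_2$; this follows from the remarks on existence, assuming $b$ is, say, bounded because the $u_i$ are Lipschitz.
\item $w$ must be an admissible test function in $\mathcal{H}^1_2$.
\end{itemize}
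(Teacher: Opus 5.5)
Your proposal is correct and follows the paper's proof essentially verbatim. You linearize to get the drift $b=-\int_0^1 D_pH(x,t,Du_\theta)\,d\theta$, then bound $[\mathrm{div}(b)]^-$ by the $\theta$-average of $[\mathrm{div}(D_pH)]^+$; the paper invokes Jensen where you use Minkowski, under the same implicit reading of the hypothesis along the segment. You then conclude with Corollary \ref{cor:divLrLp} in its negative-part form. Your sign check, that testing the adjoint equation with $\rho^{2p-1}$ leaves only $[\mathrm{div}(b)]^-$ to control, is right and is exactly what the paper uses without comment.
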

\begin{proof}
Let $w=u_1-u_2$ and note that it satisfies the evolution PDE
\[
-\partial_t w-\eps\Delta w-b(x,t)\cdot Dw=f_1-f_2\text{ in }\T^n \times(0,T)
\]
equipped with $w(x,T)=g_1(x)-g_2(x)$, where
\begin{equation}
\label{eq:bxt}
b(x,t)=-\int_0^1D_pH(x,t,\theta Du_1+(1-\theta)Du_2)\,d\theta.
\end{equation}
Since $[\mathrm{div}(D_pH)]^+\in L^r(0,T;L^q(\T^n))$ and by the Jensen's inequality we gain that $[\mathrm{div}(b(x,t))]^-$ belongs to $L^r(0,T;L^q(\T^n))$. Thus, thanks to \eqref{eq:Lp} in Corollary \ref{cor:divLrLp} we get the desired result.
\end{proof}

\begin{rem}\label{Lio}
The condition $[\mathrm{div}(D_pH)]^+\in L^r_t(L^q_x)$ is satisfied, for instance, when $H$ is uniformly convex in $p$ and $D^2u(x,t)\leq c(x,t)\mathbb{I}_n,\ c(x,t)\in L^r_t(L^q_x)$ with $r,q$ as above. This weakens the classical semiconcavity assumption of a result by  A. Douglis \cite{Douglis}. In the special case $H(Du)=\frac{|Du|^2}{2}$ we have $\mathrm{div}(D_pH)=\Delta u$. Therefore, we can conclude $L^p$ stability estimates and uniqueness when $u_1,u_2$ are solutions of Hamilton-Jacobi equations satisfying $\Delta u_i\leq g(x,t)\in L^r_t(L^q_x)$ with $r,q$ as above. This answers a uniqueness problem left open in \cite[Remark 3.6 p.87]{L82Book} and weakens the classical semi-superharmonicity condition ($\Delta u\leq k$) used to prove the uniqueness of solutions in \cite[Theorem 3.1]{L82Book}.
\end{rem}

\subsection{The Aronson-Serrin interpolated condition (iii)} 
\begin{thm}
Let $u_1,u_2$ be two solutions of \eqref{eq:HJ} with \(D_pH\in L^R(0,T;L^Q(\T^n))\) such as \(\frac{n}{2Q}+\frac{1}{P}\leq\frac12\) , terminal data $g_1,g_2\in L^p(\T^n)$ and $f_1,f_2\in L^1(0,T;L^p(\T^n))$. Then for all $t\in(0,T)$ 
\[
\|(u_1-u_2)(t)\|_{L^p(\T^n)}\leq C\left(\|g_1-g_2\|_{L^p(\T^n)}+\int_0^T\|f_1(s)-f_2(s)\|_{L^p(\T^n)}\,ds\right).
\]
\end{thm}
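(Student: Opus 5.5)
We follow the same duality scheme as in Theorem \ref{th:ii}, now using the drift itself rather than its divergence. Set $w=u_1-u_2$. As in the proof of Theorem \ref{th:ii}, $w$ solves the backward linear equation
\[
-\partial_t w-\eps\Delta w-b(x,t)\cdot Dw=f_1-f_2\quad\text{in }\T^n\times(0,T),\qquad w(x,T)=g_1-g_2,
\]
where $b$ is given by \eqref{eq:bxt}.

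The first step is to transfer the integrability of $D_pH$ to $b$. By Jensen's (or Minkowski's) inequality applied to the $\theta$-average in \eqref{eq:bxt}, we have $\|b(t)\|_{L^Q(\T^n)}\le \sup_\theta\|D_pH(\cdot,t,Du_\theta)\|_{L^Q}$. So we read the hypothesis $D_pH\in L^R(0,T;L^Q(\T^n))$ as a bound on the composed field along the segment between $Du_1$ and $Du_2$, uniformly in $\theta$. This gives $b\in L^R(0,T;L^Q(\T^n))$ with $\frac{n}{2Q}+\frac1R\le\frac12$, $R\ge2$, $Q>n$. Here $P$ in the statement should be $R$.

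The second step is an $L^{p'}$ stability estimate for the forward adjoint problem $\partial_t\rho-\eps\Delta\rho-\mathrm{div}(b\rho)=0$ on $(\tau,T)$, with $\rho(\tau)=\rho_\tau\ge0$ and $\|\rho_\tau\|_{L^{p'}}\le1$. By Remark \ref{rm:AronsonSerrin}, i.e. Theorem 2.1 and Remark 2.2 of \cite{Bianchini2017}, such a $\rho$ exists in $\mathcal H^1_2$ and is unique. Nonnegativity is preserved. To get the estimate, test with $\rho^{2s-1}$, where $s=p'/2$ in the case $p'\ge2$. Instead of integrating by parts onto $\mathrm{div}(b)$, we estimate the convective term directly:
\[
\int b\cdot D\rho\,\rho^{2s-1}\,dx\lesssim \|b(t)\|_{L^Q}\,\|D(\rho^s)\|_{L^2}\,\|\rho^s\|_{L^{\frac{2Q}{Q-2}}}.
\]
We then apply the Gagliardo--Nirenberg inequality, as in the proof of Theorem \ref{stability}, with interpolation exponent $\theta$ defined by $\frac{Q-2}{2Q}=\frac12-\frac\theta n$. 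Young's inequality absorbs the gradient part and leaves a factor $\|b(t)\|_{L^Q}^{2/(1-\theta)}$ in front of $\|\rho^s\|_{L^2}^2$. The condition $\frac{n}{2Q}+\frac1R\le\frac12$ is precisely $\frac{2}{1-\theta}\le R$, so Gronwall gives $\|\rho(t)\|_{L^{p'}}\le C\|\rho_\tau\|_{L^{p'}}$. For $1<p'<2$ we use mass conservation and interpolation, as in Theorem \ref{main2}.

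The third step repeats the proof of Corollary \ref{cor:divLrLp}. The duality identity
\[
\int w(\tau)\rho(\tau)\,dx=\int w(T)\rho(T)\,dx+\int_\tau^T\!\!\int(f_1-f_2)\rho\,dx\,dt
\]
together with Hölder's inequality and the $L^{p'}$ bound on $\rho$ gives an upper bound; applying the same argument to $-w$ gives the lower bound. Taking the supremum over admissible $\rho_\tau$ (splitting into positive and negative parts) yields the claimed estimate.

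The main obstacle is justifying the duality identity. For this we need $w\in\mathcal H^1_2\cap C(\overline Q_T)$, which is assumed in setting (iii), together with $\rho\in\mathcal H^1_2$. We also need the term $\iint b\cdot Dw\,\rho$ to be well defined. This follows from Hölder's inequality and the parabolic Sobolev embedding: since $b\in L^R_tL^Q_x$ and $Dw\in L^2$, it suffices to have $\rho\in L^{\frac{2R}{R-2}}_tL^{\frac{2Q}{Q-2}}_x$, which the energy estimate of \cite{Bianchini2017} provides. The case $p=1$ or $p=\infty$ (so $p'=\infty$ or $p'=1$) requires more care: for $p'=\infty$ we pass to the limit $s\to\infty$ in the constants, or else restrict to $1<p<\infty$.
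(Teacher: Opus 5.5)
Your proposal is correct and follows the paper's route. You write $w=u_1-u_2$ with the drift $b$ of \eqref{eq:bxt}, transfer the integrability of $D_pH$ to $b$ by Jensen/Minkowski, bound the adjoint Fokker--Planck solution in $L^{p'}$ under $b\in L^R(0,T;L^Q(\T^n))$, and conclude by the duality argument of Corollary \ref{cor:divLrLp}. The only difference is that the paper simply invokes Remark \ref{rm:AronsonSerrin} (i.e.\ \cite{Bianchini2017}) for the adjoint $L^{p'}$ bound, whereas you derive it by hand via H\"older, Gagliardo--Nirenberg with $\theta=n/Q$, Young and Gronwall, and that derivation checks out for $1<p<\infty$. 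Two small points: with your sign convention for $b$ the adjoint should read $\partial_t\rho-\eps\Delta\rho+\mathrm{div}(b\rho)=0$, which is harmless since only $|b|$ enters the estimate; and your Gronwall constants grow with $s$, so $p=1$ would need a Moser iteration rather than a limit $s\to\infty$, which makes restricting to $1<p<\infty$, as the paper implicitly does, the right choice.
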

\begin{proof}
We follow step by steps  the proof of Theorem \ref{th:ii} until the equation \eqref{eq:bxt}. Finally by Remark \ref{rm:AronsonSerrin} we get the result.
\end{proof}


\begin{thebibliography}{10}

\bibitem{ODEbook}
R.~P. Agarwal and V.~Lakshmikantham.
\newblock {\em Uniqueness and nonuniqueness criteria for ordinary differential
  equations}, volume~6 of {\em Ser. Real Anal.}
\newblock Singapore: World Scientific, 1993.

\bibitem{Ambrosio04}
L.~Ambrosio.
\newblock Transport equation and {C}auchy problem for {$BV$} vector fields.
\newblock {\em Invent. Math.}, 158(2):227--260, 2004.

\bibitem{ACFS}
L.~Ambrosio, G.~Crippa, A.~Figalli, and L.~V. Spinolo.
\newblock Some new well-posedness results for continuity and transport
  equations, and applications to the chromatography system.
\newblock {\em SIAM J. Math. Anal.}, 41(5):1890--1920, 2009.

\bibitem{AronsonSerrin}
D.~G. Aronson and J.~Serrin.
\newblock Local behavior of solutions of quasilinear parabolic equations.
\newblock {\em Arch. Rational Mech. Anal.}, 25:81--122, 1967.

\bibitem{AronsonVazquez}
D.~G. Aronson and J.~L. V\'azquez.
\newblock The porous medium equation as a finite-speed approximation to a
  {H}amilton-{J}acobi equation.
\newblock {\em Ann. Inst. H. Poincar\'e{} Anal. Non Lin\'eaire}, 4(3):203--230,
  1987.

\bibitem{Barles}
G.~Barles, A.~Quaas, and A.~Rodr{\'{\i}}guez-Paredes.
\newblock Large-time behavior of unbounded solutions of viscous
  {Hamilton}-{Jacobi} equations in {{\(\mathbb{R}^N\)}}.
\newblock {\em Commun. Partial Differ. Equations}, 46(3):547--572, 2021.

\bibitem{Benton}
S.~H.~j. Benton.
\newblock {\em The {Hamilton}-{Jacobi} equation. {A} global approach}, volume
  131 of {\em Math. Sci. Eng.}
\newblock Elsevier, Amsterdam, 1977.

\bibitem{Bianchini2017}
S.~Bianchini, M.~Colombo, G.~Crippa, and L.~V. Spinolo.
\newblock Optimality of integrability estimates for advection-diffusion
  equations.
\newblock {\em NoDEA, Nonlinear Differ. Equ. Appl.}, 24(4):19, 2017.
\newblock Id/No 33.

\bibitem{BOP}
L.~Boccardo, L.~Orsina, and A.~Porretta.
\newblock Some noncoercive parabolic equations with lower order terms in
  divergence form.
\newblock {\em J. Evol. Equ.}, 3(3):407--418, 2003.

\bibitem{BKRS}
V.~I. Bogachev, N.~V. Krylov, M.~R{\"o}ckner, and S.~V. Shaposhnikov.
\newblock {\em Fokker-Planck-Kolmogorov equations}, volume 207 of {\em Math.
  Surv. Monogr.}
\newblock Providence, RI: American Mathematical Society (AMS), 2015.

\bibitem{BonicattoCiampaCrippa2022}
P.~Bonicatto, G.~Ciampa, and G.~Crippa.
\newblock On the advection-diffusion equation with rough coefficients: weak
  solutions and vanishing viscosity.
\newblock {\em J. Math. Pures Appl. (9)}, 167:204--224, 2022.

\bibitem{BonicattoCiampaCrippa2024}
P.~Bonicatto, G.~Ciampa, and G.~Crippa.
\newblock Weak and parabolic solutions of advection-diffusion equations with
  rough velocity field.
\newblock {\em J. Evol. Equ.}, 24(1):16, 2024.
\newblock Id/No 1.

\bibitem{CaffarelliSouganidis}
L.~A. Caffarelli and P.~E. Souganidis.
\newblock Rates of convergence for the homogenization of fully nonlinear
  uniformly elliptic {PDE} in random media.
\newblock {\em Invent. Math.}, 180(2):301--360, 2010.

\bibitem{CirantGoffi2020}
M.~Cirant and A.~Goffi.
\newblock Lipschitz regularity for viscous {H}amilton-{J}acobi equations with
  {$L^p$} terms.
\newblock {\em Ann. Inst. H. Poincar\'{e} C Anal. Non Lin\'{e}aire},
  37(4):757--784, 2020.

\bibitem{CirantGoffiAPDE}
M.~Cirant and A.~Goffi.
\newblock Maximal {$L^q$}-regularity for parabolic {H}amilton-{J}acobi
  equations and applications to mean field games.
\newblock {\em Ann. PDE}, 7(2):Paper No. 19, 40, 2021.

\bibitem{Cockburnetal}
B.~Cockburn, G.~Gripenberg, and S.-O. Londen.
\newblock Continuous dependence on the nonlinearity of viscosity solutions of
  parabolic equations.
\newblock {\em J. Differential Equations}, 170(1):180--187, 2001.

\bibitem{Constantin10}
A.~Constantin.
\newblock On {N}agumo's theorem.
\newblock {\em Proc. Japan Acad. Ser. A Math. Sci.}, 86(2):41--44, 2010.

\bibitem{CrandallIshiiLions}
M.~G. Crandall, H.~Ishii, and P.-L. Lions.
\newblock User's guide to viscosity solutions of second order partial
  differential equations.
\newblock {\em Bull. Amer. Math. Soc. (N.S.)}, 27(1):1--67, 1992.

\bibitem{CrippaThesis}
G.~Crippa.
\newblock {\em The flow associated to weakly differentiable vector fields},
  volume~12 of {\em Tesi. Scuola Normale Superiore di Pisa (Nuova Series)
  [Theses of Scuola Normale Superiore di Pisa (New Series)]}.
\newblock Edizioni della Normale, Pisa, 2009.

\bibitem{DDaners2000}
D.~Daners.
\newblock Heat kernel estimates for operators with boundary conditions.
\newblock {\em Math. Nachr.}, 217:13--41, 2000.

\bibitem{DipernaLions}
R.~J. DiPerna and P.-L. Lions.
\newblock Ordinary differential equations, transport theory and {S}obolev
  spaces.
\newblock {\em Invent. Math.}, 98(3):511--547, 1989.

\bibitem{Douglis}
A.~Douglis.
\newblock The continuous dependence of generalized solutions of non-linear
  partial differential equations upon initial data.
\newblock {\em Commun. Pure Appl. Math.}, 14:267--284, 1961.

\bibitem{Evans2007}
L.~C. Evans.
\newblock Lectures on kinetic formulations of nonlinear {PDE}.
\newblock In {\em Recent developments in nonlinear partial differential
  equations. Proceedings of the second symposium on analysis and PDEs, West
  Lafayette, IN, USA, June 7--10, 2004}, pages 1--24. Providence, RI: American
  Mathematical Society (AMS), 2007.

\bibitem{EvansARMA}
L.~C. Evans.
\newblock Adjoint and compensated compactness methods for {H}amilton-{J}acobi
  {PDE}.
\newblock {\em Arch. Ration. Mech. Anal.}, 197(3):1053--1088, 2010.

\bibitem{FlemingJDE}
W.~H. Fleming.
\newblock The {Cauchy} problem for a nonlinear first order partial differential
  equation.
\newblock {\em J. Differ. Equations}, 5:515--530, 1969.

\bibitem{GigaGiga}
M.-H. Giga and Y.~Giga.
\newblock {\em A basic guide to uniqueness problems for evolutionary
  differential equations}.
\newblock Compact Textb. Math. Cham: Birkh{\"a}user, 2023.

\bibitem{GTbook}
D.~Gilbarg and N.~S. Trudinger.
\newblock {\em Elliptic partial differential equations of second order}.
\newblock Classics in Mathematics. Springer-Verlag, Berlin, 2001.
\newblock Reprint of the 1998 edition.

\bibitem{GoffiTralli}
A.~Goffi and G.~Tralli.
\newblock Global geometric estimates for the heat equation via duality methods.
\newblock Preprint, {arXiv}:2409.15456, 2024.

\bibitem{LSU}
O.~A. Ladyzenskaja, V.~A. Solonnikov, and N.~N. Ural'tseva.
\newblock {\em Linear and quasilinear equations of parabolic type}.
\newblock Translated from the Russian by S. Smith. Translations of Mathematical
  Monographs, Vol. 23. American Mathematical Society, Providence, R.I., 1968.

\bibitem{LeBrisLions}
C.~Le~Bris and P.-L. Lions.
\newblock {\em Parabolic equations with irregular data and related issues.
  {Applications} to stochastic differential equations}, volume~4 of {\em De
  Gruyter Ser. Appl. Numer. Math.}
\newblock Berlin: De Gruyter, 2019.

\bibitem{LeonariPeralPrimoSoria}
T.~Leonori, I.~Peral, A.~Primo, and F.~Soria.
\newblock Basic estimates for solutions of a class of nonlocal elliptic and
  parabolic equations.
\newblock {\em Discrete Contin. Dyn. Syst.}, 35(12):6031--6068, 2015.

\bibitem{L82Book}
P.-L. Lions.
\newblock {\em Generalized solutions of {H}amilton-{J}acobi equations},
  volume~69 of {\em Research Notes in Mathematics}.
\newblock Pitman (Advanced Publishing Program), Boston, Mass.-London, 1982.

\bibitem{Lioduke}
P.-L. Lions.
\newblock Neumann type boundary conditions for {Hamilton}-{Jacobi} equations.
\newblock {\em Duke Math. J.}, 52:793--820, 1985.

\bibitem{LioAA}
P.-L. Lions.
\newblock Regularizing effects for first-order {H}amilton-{J}acobi equations.
\newblock {\em Applicable Anal.}, 20(3-4):283--307, 1985.

\bibitem{LionsSeminar}
P.-L. Lions.
\newblock Sur les \'equations de transport.
\newblock Recorded videolectures of a course at Coll\'ege de France, available
  at \url{https://www.college-de-france.fr/fr/enseignements/audios-videos},
  2012-2013, 2021-2022.

\bibitem{LionsSeegerARMA}
P.-L. Lions and B.~Seeger.
\newblock Transport equations and flows with one-sided {L}ipschitz velocity
  fields.
\newblock {\em Arch. Ration. Mech. Anal.}, 248(5):Paper No. 86, 61, 2024.

\bibitem{LionsSeegerAIHP}
P.-L. Lions and B.~Seeger.
\newblock Linear and nonlinear transport equations with coordinate-wise
  increasing velocity fields.
\newblock {\em Ann. Inst. H. Poincar\'e{} C Anal. Non Lin\'eaire},
  42(4):971--1036, 2025.

\bibitem{MetafuneJFA}
G.~Metafune, D.~Pallara, and A.~Rhandi.
\newblock Global properties of invariant measures.
\newblock {\em J. Funct. Anal.}, 223(2):396--424, 2005.

\bibitem{NazarovUraltseva}
A.~I. Nazarov and N.~N. Ural'tseva.
\newblock The {Harnack} inequality and related properties for solutions of
  elliptic and parabolic equations with divergence-free lower-order
  coefficients.
\newblock {\em St. Petersbg. Math. J.}, 23(1):93--115, 2012.

\bibitem{Ni59}
L.~Nirenberg.
\newblock On elliptic partial differential equations.
\newblock {\em Ann. Scuola Norm. Sup. Pisa Cl. Sci. (3)}, 13:115--162, 1959.

\bibitem{Otto}
F.~Otto.
\newblock {{\(L^ 1\)}}-contraction and uniqueness for quasilinear
  elliptic-parabolic equations.
\newblock {\em J. Differ. Equations}, 131(1):20--38, 1996.

\bibitem{Perron}
O.~Perron.
\newblock Eine hinreichende {Bedingung} f{\"u}r die {Unit{\"a}t} der
  {L{\"o}sung} von {Differentialgleichungen} erster {Ordnung}.
\newblock {\em Math. Z.}, 28:216--219, 1928.

\bibitem{PorrettaARMA}
A.~Porretta.
\newblock Weak solutions to {F}okker-{P}lanck equations and mean field games.
\newblock {\em Arch. Ration. Mech. Anal.}, 216(1):1--62, 2015.

\bibitem{PorrettaPriola}
A.~Porretta and E.~Priola.
\newblock Global {L}ipschitz regularizing effects for linear and nonlinear
  parabolic equations.
\newblock {\em J. Math. Pures Appl. (9)}, 100(5):633--686, 2013.

\bibitem{Silvestre}
L.~Silvestre.
\newblock On the differentiability of the solution to the {Hamilton}-{Jacobi}
  equation with critical fractional diffusion.
\newblock {\em Adv. Math.}, 226(2):2020--2039, 2011.

\bibitem{SouganidisJDE}
P.~E. Souganidis.
\newblock Existence of viscosity solutions of {Hamilton}-{Jacobi} equations.
\newblock {\em J. Differ. Equations}, 56:345--390, 1985.

\end{thebibliography}

\end{document}